\newcommand{\R}{\mathbb{R}}
\newcommand{\E}{\mathbb{E}}
\newcommand{\AP}{\text{AP}}
\providecommand{\abs}[1]{\lvert#1\rvert}
\DeclareMathOperator*{\argmax}{argmax}
\newtheorem{theorem}{Theorem}[section]
\newtheorem{lemma}{Lemma}[section]
\newtheorem{proposition}{Proposition}[section]
\newtheorem{example}{Example}
\newtheorem{remark}{Remark}[section]
\theoremstyle{definition}
\numberwithin{equation}{section}
\newtheorem{assumption}{Assumption}
\begin{document}

\title{\bf  Joint pricing and inventory control for a stochastic inventory system with Brownian motion demand}

\author{Dacheng Yao}
\affil{\small Academy of Mathematics and Systems Science, Chinese
Academy of Sciences, Beijing, 100190, China; dachengyao@amss.ac.cn} 
\date{}
\maketitle
\begin{abstract}
In this paper, we consider an infinite horizon, continuous-review, stochastic inventory system in which
cumulative customers' demand is price-dependent and is modeled as a Brownian motion.
Excess demand is backlogged.
The revenue is earned by selling products and
the costs are incurred by holding/shortage and ordering,
the latter consists of a fixed cost and a proportional cost.
Our objective is to simultaneously determine a pricing strategy and an inventory control strategy
to maximize the expected long-run average profit.
Specifically, the pricing strategy provides the price $p_t$ for any time $t\geq0$
and the inventory control strategy characterizes when and how much we need to order.
We show that an $(s^*,S^*,\bm{p}^*)$ policy is optimal and obtain the equations of optimal
policy parameters, where $\bm{p}^*=\{p_t^*:t\geq0\}$.
Furthermore, we find that at each time $t$, the optimal price $p_t^*$ depends on the current inventory level $z$,
and it is increasing in $[s^*,z^*]$ and is decreasing in $[z^*,\infty)$, where $z^*$
is a negative level.

\vspace{.25in}
\noindent
\textbf{Keywords:} Stochastic inventory model, pricing, Brownian motion demand, $(s,S,\bm{p})$ policy, impulse control, drift rate control.

\end{abstract}

\section{Introduction}
\label{}

Exogenous selling price is always assumed in the classic production/inventory models;
see e.g., \cite{Scarf1960} and the research thereafter.
In practice, however, \emph{dynamic} pricing
is one important tool for revenue management by balancing customers' demand and inventory level.
For example, airline firms change prices timely according to the number of unsold tickets
(see \cite{ChaoZhou2006}),
and Dell's price reflects real-time demand fluctuations, and varies significantly from week to week
(see \cite{Byrnes2003}).
Dynamic pricing is usually coordinated with inventory control in many firms,
e.g., Dell, Amazon, FairMarket, MSN Auction, etc. (see \cite{Feng2010}).
In this paper, we will consider an optimization problem of combining dynamic pricing and inventory control
in a continuous-review stochastic inventory model,
and analyze the dependence of price on the inventory level.

The topic of joint pricing and inventory control has been studied in different models,
and most focus on periodic-review inventory models, in which a \emph{fixed} price is chosen for each period.
As the first work on this topic, \cite{Whitin1955} studies it in a newsboy problem; see
a recent survey in the newsboy setting in \cite{PetruziDada1999}.
 \cite{FedergruenHeching1999} studies a periodic-review stochastic inventory model
without any fixed ordering cost and shows that a base-stock list price policy is optimal.
Later, \cite{ChenSimchi-Levi2004a,ChenSimchi-Levi2004b} study periodic-review stochastic inventory models
with a fixed ordering cost in finite horizon and infinite horizon respectively.
They show that the optimal policies are of $(s,S,\bm{p})$-type, under which,
the inventory control in each period is the classical $(s,S)$ policy
and the price is determined based on the inventory position at the beginning of each period.
Currently, there are many extensions for joint pricing and inventory control in
periodic-review stochastic inventory models,
e.g., \cite{YinRajaram2007} for a problem with Markovian demand,
\cite{ChaoChenZheng2008} and \cite{Feng2010}
for problems with supply capacity,
\cite{ChungTalluriNarasimhan2015} for a problem with multiple price markdowns,
and \cite{BernsteinLiShang2016} for a problem with ordering leadtime.
Furthermore, dynamic pricing has been concerned in few continuous-review inventory models
where demand arrives randomly at discrete time.
\cite{ChenSimchi-Levi2006} studies an inventory model with a general semi-Markov process demand,
where customer's demand arrives randomly at discrete time with independent interval time
and demand size is a random variable and independent of each other,
and proves the optimality of an $(s,S,\bm{p})$ policy using dynamic programming.
Based on this optimality of an $(s,S,\bm{p})$ policy,
\cite{ChaoZhou2006} focuses on the $(s,S,\bm{p})$ policies in a special model with Poisson demand,
and shows that the price is a unimodal function of the inventory level with maximum point at zero.

In this paper, we study a continuous-review stochastic inventory system, where the cumulative demand
is modeled as a Brownian motion with positive price-dependent drift rate.
Excess demand is backlogged.
The revenue is earned by selling products and the costs are incurred by holding/shortage and ordering,
the latter consists of a fixed cost and a unit cost.
Our objective is to find a policy that jointly optimizes dynamic pricing and ordering strategies
to maximize the long-run average profit. Specifically,
the dynamic pricing strategy lists the price for all time $t\geq0$.
Using an upper bound approach,
we prove that an $(s^*,S^*,\bm{p}^*)$ policy is optimal for our model,
and we further show that the optimal price $p_t^*$ at any time $t$ completely depends on the current inventory level $z$,
and there exists a $z^*\leq0$ such that $p_t^*$ is increasing in $[s^*,z^*]$ and is decreasing in $[z^*,\infty)$.

There are two reasons that motivate us to model the cumulative demand process as a Brownian motion with strictly positive drift rate:
First, a demand with normal distribution is usually used to approximate customer demand in the periodic-review
stochastic inventory system literature (see e.g., \cite{Porteus2002} and \cite{Zipkin2000}),
and Brownian approximation is constructed to study many production-inventory systems
(see e.g., \cite{AllonVanMieghem2010}, \cite{Bradley2004} and \cite{BradleyGlynn2002}).
In addition, Brownian demand can capture customers' return and has been used in \cite{Bather1966}, \cite{Gallego1990}, etc.
Second, Brownian motion brings tractability for the analysis, and provides specific properties for the optimal pricing.
In particular, Brownian control problem is tractable for proving the optimality of selected policy
by using our upper bound approach.
Also, the possibility of negative demand (i.e. customers' return) results in a negative turnover point $z^*$
such that the optimal price at any time is increasing on the inventory level
when the level is in $[s^*,z^*]$ and is decreasing when the level in $[z^*,\infty)$;
see more discussion after Theorem \ref{thm:mainresults}.

The inventory control models with Brownian motion demand have been studied extensively,
starting from \cite{Bather1966}. This pioneering work studies an impulse control
of a Brownian inventory model under average cost criterion with a fixed ordering cost,
and shows that an $(s,S)$ policy is optimal.
It is then extended to several different models.
\cite{Sulem1986} and \cite{Benkherouf2007} consider Brownian inventory models
under discounted cost criterion
with linear and general holding/shortage cost functions respectively;
\cite{Bar-IlanSulem1995} and \cite{MuthuramanSeshadriWu2015} consider Brownian inventory models
with constant lead times and stochastic lead times respectively;
and \cite{YaoChaoWu2015} and \cite{HeYaoZhang2016} consider Brownian inventory models
with concave ordering costs and quantity-dependent set-up costs respectively.
They all prove the optimality of $(s,S)$ policies for the models considered.
It's worth mentioning that the upper bound approach adopted in this paper is similar to the lower bound approach in
\cite{YaoChaoWu2015} and \cite{HeYaoZhang2016}.
However, the analysis for the structure of the relative value function
(it is the key to characterize the optimal policy parameters) in our paper is very different.
Specifically, it is easy to analyze the properties of the relative value function in their works
by obtaining its explicit solution.
However, in this paper, it is difficult to get an explicit solution from the ordinary differential equation (ODE)
since the drift rate depends on the state (see \eqref{eq:w-ODE}).
To overcome this, we prove an important proposition (see Proposition \ref{prop:w}) to analyze the properties from the ODE and boundary conditions directly.
This method doesn't require an explicit solution
and can be applied to more general models (e.g., more general demand process models).

The most related to our work are \cite{ChenWuYao2010} and \cite{ZhangZhang2012},
which, under no backlog assumption,
study joint pricing and inventory control in some special Brownian inventory systems.
\cite{ChenWuYao2010} studies a Brownian inventory model,
in which $(0,S)$-type inventory control strategies are assumed and
a constant price $p_n$ is implemented when the inventory level is in $(S_n,S_{n-1}]$
with $S=S_0>S_1>\ldots>S_N=0$ and $S_n=(N-n)S/N$, $n=0,1,\ldots,N$ ($N$ is fixed).
The optimal policy parameters $(S,p_1,p_2,\ldots,p_N)$ are identified for this
discrete price adjustment system with a linear holding cost.
\cite{ZhangZhang2012} further considers a special Brownian inventory model,
where demand rate as a function of price $p\in[a,b]$ is given by
$\lambda(p)=\lambda_1/(\lambda_0+p)$ for positive constants $\lambda_0$ and $\lambda_1$.
With this special drift rate, the optimal price is discrete in the inventory level;
i.e. there exists a positive inventory level $x^*$ such that for each time $t$,
the optimal price $p_t^*=b$ for $z\leq x^*$ and $p_t^*=a$ for $z>x^*$.
In short, these two papers just solve some special no backlog Brownian inventory models
with discrete price adjustments,
and it is still open if $(s,S,\bm{p})$ policy is optimal
for the general Brownian inventory model with backlog.
Our objective in this paper is to prove the optimality of the $(s,S,\bm{p})$ policy
where backorders are allowed,
and to give general price adjustments including both discrete and continuous adjustments.

Since the demand drift rate depends on the price,
a stream of literature related to our work is drift rate control problem.
\cite{AtaHarrisonShepp2005} studies a Brownian processing problem
in which manager can continuously modify the drift rate
when the inventory level is limited in a finite interval $[0,b]$.
Under some assumptions for control cost,
the authors explicitly solve the problem and characterize the equations of optimal cost and
drift rate.
Later, \cite{OrmeciVandeVate2011} studies a Brownian control problem
where the drift rate can be selected in a finite set and
changeover costs are charged when the controller changes the drift rate.
The authors demonstrate that the problem admits an optimal policy that is
a deterministic non-overlapping control band policy.
For more works about drift rate control, we refer to
\cite{AtaHarrisonShepp2005}, \cite{OrmeciVandeVate2011} and references therein.

The contribution of this paper can be summarized as follows.
First, although there is an extensive literature studying the joint pricing and inventory control,
most focus on the periodic-review models. In this paper,
we prove the optimality of $(s,S,\bm{p})$ policy
for a continuous-review inventory model with Brownian motion demand
and explicitly characterize the optimal policy parameters by an ODE
with some boundary conditions.
This fills in the gap on jointly optimizing dynamic pricing and inventory control for
a general Brownian inventory system allowing backlog.
Second, we find that the optimal price depends on the current inventory level
and is its a general function, including both discrete and continuous cases.
We further prove that the optimal price first increases
and then decreases as the inventory level decreases from $S^*$ to $s^*$ with a negative turnover point $z^*$,
which is different to the Poisson demand model (\cite{ChaoZhou2006}) with zero as the turnover point.
Third, we provide an approach to characterize the properties of the derivative of relative value function and the optimal policy parameters
using the ODE and the boundary conditions directly.
Comparing with the lower bound approach in literature (see e.g., \cite{YaoChaoWu2015} and \cite{HeYaoZhang2016}),
the approach in this paper doesn't require an explicit solution for the relative value function
and thus could be applied to the models with more general demand process.

The rest of this paper is as follows. In Section \ref{sec:modelandmainresults},
we model our problem and state the main results.
In Section \ref{sec:upperbound}, we establish an upper bound for the profit under any admissible policy.
Then in Section \ref{sec:pricing}, we analyze the optimal pricing strategy under
a given $(s,S)$ inventory control strategy.
In Section \ref{sec:inventorycontrol}, we devote to an optimal inventory control strategy
by using the upper bound established in Section \ref{sec:upperbound}.
Finally, we conclude this paper in Section \ref{sec:conclusion}.
We close this section with some frequently used notation.
Let $\mathcal{C}^1(\R)$ denote the set of functions with continuous derivatives on $\R$.
For a real-valued function $f$ on $[0,\infty)$,
let $f_{t-}$ ($f_{t+}$) be its left (right) limit at $t$ and $\Delta f_t=f_{t+}-f_{t-}$.
Let $1_A$ be the indicator function of $A$, i.e., $1_A=1$ if $A$ occurs and $1_A=0$ otherwise.

\section{Model and main results}
\label{sec:modelandmainresults}

\subsection{Model}

Consider a continuous-review inventory system, whose cumulative customers' demand up to time $t$
is given by
\begin{equation}
\label{eq:D}
D_t=\int_0^t\mu(p_u)\,\mathrm{d}u+\sigma B_t,\quad t\geq0,
\end{equation}
where $\mu(p_t)$ is the demand rate at time $t$ and depends on the product price
$p_t\in[\underline{p},\bar{p}]$, $\bar{p}>\underline{p}\geq0$,
$\sigma^2$ is the variance and is a constant,
and $B=\{B_t:t\geq0\}$ denotes a standard Brownian motion with $B_0=0$.
We assume $B$ is defined on some filtered probability space
$(\Omega,\{\mathcal{F}_t\},\mathcal{F},\mathbb{P})$ and $B$ is an $\mathcal{F}_t$-martingale.
The demand process can be understood as the \textit{netput} process,
which denotes the difference between the real demand and some input processes,
e.g., customer returns.
Notice that the variance (i.e. the uncertainty) in the demand \eqref{eq:D} is assumed to be independent to the price.
It is very similar to the additive demand in the periodic-review model in literature;
see e.g., \cite{ChenSimchi-Levi2004a} and \cite{Feng2010}.
This assumption is common in economic literature (e.g., \cite{Mills1959})
and this kind of demand is reasonable to model branded products (e.g., Intel's processors)
that exhibit substantial customer demand; see \cite{LiZheng2006}.

Let $Z_t$ denote the controlled inventory level at time $t\geq0$.
In this system, there are two control strategies: an inventory control strategy $\bm{y}=\{y_t:t\geq0\}$
and a pricing strategy $\bm{p}=\{p_t:t\geq0\}$,
where $y_t$ denotes the cumulative order quantity up to time $t$
and $p_t$ denotes the product price at time $t$.
More specifically, an inventory control strategy $\bm{y}$ can be
expressed as a sequence $\{(\tau_i,\xi_i):i=0,1,2,\cdots\}$,
where $\tau_i$ denotes the $i^{th}$ ordering time and $\xi_i$ denotes the ordering quantity at time $\tau_i$.
We let $\tau_0=0$ and $\xi_0\geq0$ by convention.
Then,
\[
y_t=\sum_{i=0}^{N_t}\xi_i,\quad t\geq0,
\]
where $N_t=\sup\{i:\tau_i\leq t\}$ denotes the numbers of order up to time $t$ except possible order at time $0$.
An inventory control strategy $\bm{y}=\{y_t:t\geq0\}$ is called \emph{admissible}
if it satisfies the following conditions (similar conditions can be found in \cite{YaoChaoWu2015}):
\begin{itemize}
\item[(i)] $\bm{y}$ is adapted to $\mathcal{F}$, i.e.,
$\tau_i$ is an $\mathcal{F}$-stopping time,  $\xi_i$ is $\mathcal{F}_{\tau_i-}$-measurable.
\item[(ii)] For each sample path $\omega\in\Omega$, $\bm{y}(\omega,\cdot)$ is RCLL (right continuous on $[0,\infty)$ and has left limits on $(0,\infty)$).
\item[(iii)] For each inventory control strategy $\bm{y}$, there exists some finite number $C_{\bm{y}}$ such that $\bm{y}$ does not place an order when the inventory level $Z$ is greater than $C_{\bm{y}}$.
Because $C_{\bm{y}}$ can be arbitrarily large, all policies of practical interest are contained.
\end{itemize}
Under an admissible inventory control strategy $\bm{y}$,
the controlled inventory level $Z$ is given by
\begin{equation}
\label{eq:Z}
Z_t=x-D_t+y_t=x-\int_0^t\mu(p_u)\,\mathrm{d}u-\sigma B_t+\sum_{i=0}^{N_t}\xi_i, \quad t\geq0,
\end{equation}
where $Z_{0-}=x$ is the initial inventory level.
Furthermore, a  pricing strategy $\bm{p}=\{p_t:p_t\in[\underline{p},\bar{p}], t\geq0\}$
is {called \emph{admissible} if $\bm{p}$ is adapted to $\mathcal{F}$.
Let $\phi=(\bm{y},\bm{p})$ be an admissible policy including an admissible inventory control strategy $\bm{y}$
and an admissible pricing strategy $\bm{p}$, and let $\Phi$ be the set of all admissible policies.

We now introduce the revenue and costs in our system.
For the revenue, we assume that customers pay the fees upon arrival when backorder happens.
This assumption is common in the existing pricing literature where backorders are allowed;
see e.g., \cite{ChenSimchi-Levi2004a} and \cite{BernsteinLiShang2016}.
Thus, the revenue for our system up to time $t$ is given by
\begin{equation}
\label{eq:revenue}
\int_0^t p_u\,\mathrm{d}D_u.
\end{equation}
Using \eqref{eq:D}, the expectation of \eqref{eq:revenue} can be written as
\[
\E_x\Big[\int_0^t p_u\,\mathrm{d}D_u\Big]
=\E_x\Big[\int_0^t p_u\mu(p_u)\,\mathrm{d}u\Big],
\]
where $\E_x$ denotes the expectation operator conditioning on the initial inventory level $Z_{0-}=x$.
Furthermore, there are two costs in our system: holding/shortage cost and ordering cost.
The holding/shortage cost is charged at rate $h(z)$ when the inventory level
is $z$, i.e., $h(z)$ is the holding cost per unit of time
when $z\geq0$ and is the shortage cost per unit of time when $z<0$.
Thus, the expected holding/shortage cost in $[0,t]$ is
\[
\E_x\Big[\int_0^t h(Z_u)\,\mathrm{d}u\Big].
\]
The ordering cost is given by a linear function $K+k\xi$ for each order quantity $\xi>0$,
where $K>0$ is the setup cost and $k>0$ is the unit cost.

Under an admissible policy $\phi\in\Phi$, the long-run average profit is given by
\begin{eqnarray*}
{\sf AP}(x,\phi)
=\liminf_{t\to\infty}\frac{1}{t}\E_x \Big[\int_0^t \big[p_u\mu(p_u)-h(Z_u)\big]\,\mathrm{d}u
-\sum_{i=0}^{N_t}\big(K+k\xi_i\big)\Big],\label{eq:AP}
\end{eqnarray*}
and our objective is to find a $\phi^*\in\Phi$ solving
\begin{equation}
\label{eq:supAP}
{\sf AP}(x,\phi^*)=\sup_{\phi\in\Phi}{\sf AP}(x,\phi).
\end{equation}
Finally, we present the assumptions in this paper as follows.
Assumption \ref{assumption-mu} is for the demand rate function $\mu$,
and Assumption \ref{assump:h-qd} gives some usual assumptions for the holding/shortage cost function $h$;
see \cite{WuChao2014} for similar assumptions about $h$.
\begin{assumption}
\label{assumption-mu}
The demand rate $\mu(p):[\underline{p},\overline{p}]\to(0,\infty)$ satisfies the following assumptions.
\begin{itemize}
\item[(a)] $\mu(p)$ is twice continuously differential; and
\item[(b)] $\mu(p)$ is strictly decreasing, i.e. $\mu'(p)<0$.
\end{itemize}
\end{assumption}
\begin{assumption}
\label{assump:h-qd}
The holding/shortage cost function $h: \R\to[0,\infty)$ satisfies the following assumptions:
\begin{itemize}
\item[(a)] $h$ has a minimum at $z=0$ and $h(0)=0$;
\item[(b)] $h$ is continuously differentiable except at $z=0$;
\item[(c)] $h$ is strictly convex; and
\item[(d)] $h$ is polynomially bounded, i.e., there exist constants $c_1\in\R$, $c_2>0$, and an integer $n\in\mathbb{N}^+$
such that $h(z)\leq c_1+c_2 |z|^n$ for all $z\in \R$.
\end{itemize}
\end{assumption}
It follows from Assumption \ref{assump:h-qd} (a)-(c) that we have
\begin{eqnarray}
&& h'(z)
\left\{
\begin{array}{ll}
<0 & \text{for $z<0$},\\
>0 & \text{for $z>0$},
\end{array}
\right.\label{eq:h'}\\
&&\lim_{|z|\to \infty} h(z) = +\infty,\label{eq:limh}
\end{eqnarray}
and there exist constants $d_1>0$ and $d_2$ such that
\begin{equation}
\label{eq:h-p-q}
h(z)\geq d_1\abs{z}+d_2.
\end{equation}

\subsection{Main results}

To present our main results, we first introduce one class of inventory control strategies
and one class of pricing strategies.

For the inventory control, we aim to find an $(s,S)$ policy to
be optimal. Under an $(s,S)$ policy,
the system orders items immediately to $S$
once the inventory level is lower than or equals to $s$.
More specifically, $(s,S)$ policy can be given by
\begin{equation}
\label{eq:sS-0}
\tau_0=0,\quad
\xi_0=\left\{
\begin{array}{ll}
S-x & \text{if $Z_{0-}=x\leq s$},\\
0 & \text{if $Z_{0-}=x>s$},
\end{array}
\right.
\end{equation}
and for $i\geq1$,
\begin{equation}
\label{eq:sS-i}
\tau_i=\inf\{t> \tau_{i-1}:Z_{t-}=s\},
\quad
\xi_i=S-s.
\end{equation}
We provide one class of pricing strategies as follows.
Define
\begin{align}
\label{eq:Pi}
\Pi(p,w)=\mu(p)[p-w]\quad \text{for any $p\in[\underline{p},\bar{p}]$ and $w\in\R$},
\end{align}
and let $p_{\pi}(w)$ be the smallest one in $[\underline{p},\bar{p}]$
that maximizes $\Pi(p,w)$ for given $w\in\R$, i.e.,
\begin{align}
\label{eq:p-pi}
p_{\pi}(w)=\min\argmax_{p\in[\underline{p},\bar{p}]} \Pi(p,w).
\end{align}
Thus, for each given function $w(\cdot):\R\to\R$,
we define a pricing strategy as
\begin{equation}
\label{eq:pricing}
\bm{p}=\{p_t=p_{\pi}(w(Z_t)):t\geq0\}.
\end{equation}
In the following main results, $w(\cdot)$ is actually the derivative of relative value function $V$, i.e., $w=V'$.
The detailed explanation can be found after Theorem \ref{thm:mainresults}.

Theorem \ref{thm:mainresults} below states our main results on the optimal policy characterization.
\begin{theorem}
\label{thm:mainresults}
Under Assumption \ref{assumption-mu} and \ref{assump:h-qd},
if there exists $((s^*,S^*),w^*(\cdot),\gamma^*)$,
where $s^*$, $S^*$ and $\gamma^*$ are constants and
$w^*(\cdot)$ is continuously differentiable in $\R$ and twice continuously differentiable in $\R\setminus \{0\}$,
satisfying the following ODE
\begin{eqnarray}
\label{eq:w-ODE}
\frac{\sigma^2}{2}w'(z)+\pi(w(z))-h(z)=\gamma \quad\text{for $z\in\R$}
\end{eqnarray}
with boundary conditions
\begin{eqnarray}
&&\int_{s}^S \big[w(z)-k\big]\mathrm{d}z=K,\label{eq:w-int=K}\\
&&w(s)=w(S)=k,\label{eq:w(s)=w(s)=k}\\
&&\lim_{z\to\infty}w'(z)/h(z)=0, \label{eq:limw'/h}
\end{eqnarray}
where $s<S$ and
\begin{align}
\label{eq:pi}
\pi(w)=\Pi(p_{\pi}(w),w)=\mu(p_{\pi}(w))[p_{\pi}(w)-w],
\end{align}
then we have the following.
\begin{itemize}
\item[(1)]
$((s^*,S^*),\bm{p}^*)$ policy with $\bm{p}^*=\{p_t=p_{\pi}(w^*(Z_t)):t\geq 0\}$
is an optimal policy and $\gamma^*$ is the optimal profit for our problem,
i.e., $(s^*,S^*)$ policy is an optimal inventory control strategy
and $\bm{p}^*$ is the associated optimal pricing strategy.
\item[(2)] There exists a unique $z^*$ with $z^*\leq0$ and $s^*<z^*<S^*$
such that $p^*(z)\triangleq p_{\pi}(w^*(z))$ is increasing in $[s^*,z^*]$,
and decreasing in $[z^*,\infty)$.
\end{itemize}
\end{theorem}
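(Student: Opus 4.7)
The plan is to handle (1) by a verification argument based on the upper bound developed in Section \ref{sec:upperbound}, and to obtain (2) by translating the monotonicity of $p^*(\cdot)$ into that of $w^*(\cdot)$ through the monotone map $p_\pi$, then analyzing $w^*$ directly from the ODE \eqref{eq:w-ODE} and its boundary conditions without ever solving \eqref{eq:w-ODE} explicitly.

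For part (1), the key observation is that $w^*$ is the derivative of a $\mathcal{C}^1$ candidate relative value function $V^*$, and that \eqref{eq:w-ODE} is exactly the HJB equation for our jointly drift-rate- and impulse-controlled Brownian inventory system, since $\pi(w)=\sup_{p\in[\underline p,\bar p]}\Pi(p,w)$ by construction. The upper bound of Section \ref{sec:upperbound} should therefore yield ${\sf AP}(x,\phi)\le\gamma^*$ for every admissible $\phi\in\Phi$, by applying It\^o's formula to $V^*(Z_t)$, replacing the infinitesimal drift via \eqref{eq:w-ODE}, and controlling the resulting stochastic integral and the growth of $V^*(Z_t)$ through Assumption \ref{assump:h-qd}(d) together with the tail condition \eqref{eq:limw'/h}. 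To verify that $((s^*,S^*),\bm p^*)$ attains this bound, the boundary conditions \eqref{eq:w-int=K} and \eqref{eq:w(s)=w(s)=k} imply $V^*(S^*)-V^*(s^*)=K+k(S^*-s^*)$, so each reordering jump of $V^*$ cancels the associated ordering cost exactly, while the pricing rule $p_t^*=p_\pi(w^*(Z_t))$ attains the pointwise supremum in the HJB and so closes the inequality. Dividing by $t$ and letting $t\to\infty$ gives ${\sf AP}(x,\phi^*)=\gamma^*$ and hence optimality.

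For part (2), the first-order condition $\mu'(p)(p-w)+\mu(p)=0$ in \eqref{eq:p-pi}, combined with Assumption \ref{assumption-mu} and an implicit-function computation, shows that $p_\pi(\cdot)$ is nondecreasing; so it suffices to prove that $w^*$ attains a unique maximum at some $z^*\in(s^*,S^*)$ with $z^*\le 0$ and is strictly decreasing on $[z^*,\infty)$. Rewriting \eqref{eq:w-ODE} as $(w^*)'(z)=\tfrac{2}{\sigma^2}\bigl[\gamma^*+h(z)-\pi(w^*(z))\bigr]$, the endpoint data $w^*(s^*)=w^*(S^*)=k$ together with Rolle's theorem produce at least one critical point in $(s^*,S^*)$. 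For uniqueness I would invoke Proposition \ref{prop:w}: differentiating $g(z):=\gamma^*+h(z)-\pi(w^*(z))$ and using strict convexity of $h$ together with $\pi'(w)=-\mu(p_\pi(w))$ (envelope theorem) and convexity of $\pi$, one shows that $g$ can change sign at most once on each side of its minimizer, so $(w^*)'$ has exactly one zero. On $[S^*,\infty)$, the asymptotic condition \eqref{eq:limw'/h} combined with the ODE forces $\pi(w^*(z))/h(z)\to 1$, hence $w^*(z)\to-\infty$ and $(w^*)'(z)<0$, extending the strict decrease to the whole half-line. The sign $z^*\le 0$ is then pinned down by comparing $g(0)$ with the integral constraint \eqref{eq:w-int=K}: since $h$ is minimized at $0$, the ordering-cost balance forces the critical point into the backorder region.

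The main obstacle is the uniqueness and negativity of $z^*$ in part (2). Because \eqref{eq:w-ODE} is a nonlinear first-order ODE with no closed-form solution, every step of the structural analysis must rely purely on qualitative inputs---endpoint data, strict convexity of $h$, convex-decreasing structure of $\pi$, and the asymptotic tail condition---rather than on explicit computation. This indirect ODE-plus-boundary-condition argument, formalized in Proposition \ref{prop:w}, is precisely the methodological contribution the introduction highlights, and it is where the substantive work of the proof lies.
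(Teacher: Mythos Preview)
Your high-level strategy matches the paper's, but two concrete gaps need attention.

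\textbf{Part (1).} Applying Proposition~\ref{prop:upperbound} requires its hypotheses \eqref{eq:lbPoission}--\eqref{eq:lbf} to hold for \emph{all} $z\in\R$ and for \emph{all} jumps $z_2<z_1$, not just at the particular jump $s^*\to S^*$. Two ingredients are missing from your sketch. First, one cannot simply take $V^*(z)=\int_{s^*}^z w^*$ on $(-\infty,s^*)$: the paper instead extends $V^*$ linearly with slope $k$ below $s^*$ (so that \eqref{eq:lbf} holds with $|f'|\le a_0$ on $(-\infty,0]$) and then separately verifies \eqref{eq:lbPoission} on that region using $(w^*)'(s^*)>0$ and $h(z)>h(s^*)$. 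Second, the global jump inequality \eqref{eq:lbK} requires knowing that $w^*>k$ exactly on $(s^*,S^*)$ and $w^*\le k$ outside; this follows from the unimodality in Proposition~\ref{prop:w} together with \eqref{eq:w(s)=w(s)=k}. Thus the structural result you reserve for part~(2) is in fact a prerequisite for completing part~(1), and the verification is not the routine It\^o computation your paragraph suggests.

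\textbf{Part (2).} Your stated reason for $z^*\le 0$ is incorrect: the integral constraint \eqref{eq:w-int=K} plays no role here. The paper obtains $z^*\le 0$ by first proving $(w^*)'(z)<0$ for every $z>0$ (step~(a) of Proposition~\ref{prop:w}), so the maximizer of $w^*$ cannot lie in $(0,\infty)$. Your own crossing idea actually delivers this if pushed through: at any zero $z_0$ of $g(z):=\gamma^*+h(z)-\pi(w^*(z))$ one has $(w^*)'(z_0)=0$ and hence $g'(z_0)=h'(z_0)$; for $z_0>0$ this is strictly positive, so every zero of $g$ on $(0,\infty)$ would be an upward crossing, which is incompatible with $g(z)<0$ for large $z$ (forced by $w^*(z)\to-\infty$ via \eqref{eq:limw'/h} and \eqref{eq:limpi}). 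Hence $g<0$ throughout $(0,\infty)$, giving $z^*\le 0$. Note also that convexity of $\pi$ is never needed; only $\pi'<0$ from \eqref{eq:pi'} enters.
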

\begin{remark}
This paper proves the optimality of $(s^*,S^*,\bm{p}^*)$ policy by
assuming the existence of solution for \eqref{eq:w-ODE}-\eqref{eq:limw'/h},
and the same assumptions can be found in \cite{Benkherouf2007} and \cite{LaknerReed2010}.
Indeed, we can get the explicit solutions when $\mu(p)$ has special forms,
e.g., the case in the following Example 1.
\end{remark}
\begin{remark}
Under $(s^*,S^*)$ policy, if the initial inventory level
is lower than $s^*$, i.e. $Z_{0-}=x<s^*$, an order
will be placed immediately to increase the inventory level
up to $S^*$, and then the inventory level will not be lower than $s^*$
because of the continuous demand. Furthermore,
since the demand process is a Brownian motion, the inventory level
is possibly higher than $S^*$.
Thus, the controlled inventory level satisfies
\[
Z_t\geq s^*, \quad t\geq 0.
\]
This is the reason why we need to give the monotonicity of $p^*(z)$ on $[s^*,\infty)$.
\end{remark}

First, we provide an explanation for \eqref{eq:w-ODE}. Under any given $(s,S)$ policy and any pricing strategy $\bm{p}=\{p_t:t\geq0\}$,
let $X_t=z-\int_0^t\mu(p_u) \,\mathrm{d}u-\sigma B_t$ be the inventory level at time $t$ when the initial inventory level is $z>s$ and no order is placed in $[0,t]$,
and let
\[
V(z)=\gamma \mathbb{E}_x\big[\tau(s)\big]-\mathbb{E}_x\Big[\int_0^{\tau(s)} \big(p_u\mu(p_u)-h(X_u)\big)\,\mathrm{d} u\Big],
\]
where $\tau(s)$ denotes the first ordering time, i.e., the first time when the inventory hits level $s$.
$V(z)$ is called \emph{relative value function} in literature
(see e.g., \cite{OrmeciVandeVate2011}) and can be interpreted as the expected benefit advantage of reorder level $s$ relative to the inventory level $z$.
It is known that $V$ satisfies the following ordinary differential equation
(see e.g., pp. 170 in \cite{WuChao2014})
\begin{equation}
\frac{\sigma^2}{2} V''(z)-\mu(p_0)V'(z)+p_0\mu(p_0)-h(z)=\gamma.
\end{equation}
Let $w(z)=V'(z)$ and choose $\bm{p}$ following \eqref{eq:pricing},
then $w$ satisfies \eqref{eq:w-ODE}.

We next provide an intuitive argument on the conditions \eqref{eq:w-int=K}-\eqref{eq:limw'/h}
that the relative value function and the optimal policy parameters should satisfy.
First, under $(s,S)$ policy,
an order with quantity $S-s$ will incur cost $K+k(S-s)$, and it follows from the definition of the value function $V$ that
\[
V(s)=V(S)-K-k(S-s),
\]
which yields the condition \eqref{eq:w-int=K} with $V(z)=\int_s^z w(y)\,\mathrm{d}y$.
Also, it is known that when $h$ is polynomial with order $n$,
$V(z)$ has order $n+1$ and $w'(z)=V''(z)$ has order $n-1$ as $z$ goes to infinity (see Section 4 in \cite{WuChao2014}).
Thus, \eqref{eq:limw'/h} holds.
Next, we show the conditions that should be imposed on the optimal parameters $s^*$ and $S^*$.
Starting from $s$, the system should jump to an $S$ that maximizes
\[
V(S)-K-k(S-s).
\]
Therefore, $\mathrm{d}(V(S)-K-k(S-s))/\mathrm{d} S=0$, namely,
\[
V'(S)=k,
\]
which implies the second part of \eqref{eq:w(s)=w(s)=k}. Similarly,
the starting point should satisfy $V'(s)=k$, which is the first part of \eqref{eq:w(s)=w(s)=k}.
Conditions in \eqref{eq:w(s)=w(s)=k} are called \emph{smooth-pasting conditions} in literature (see e.g., \cite{DaiYao2013a}),
and they ensure that the function $w$ is continuous at $s^*$ and $S^*$.
Boundary conditions \eqref{eq:w-int=K}-\eqref{eq:limw'/h} also can be found in \cite{Bather1966}
to characterize the optimal ordering policy parameters.

Under $\phi=((s^*,S^*),\bm{p}^*)$ policy,
the controlled inventory level process $\{Z_t:t\geq0\}$
is a regenerative process and one cycle can be defined
as the during time between two successive times
when the inventory level $Z$ hits $S^*$.
For the optimal price $\bm{p}^*$, we have
\[
\begin{cases}
p^*(z_1)\leq p^*(z_2) & \text{for $z_1>z_2\geq z^*$,}\\
p^*(z_1)\geq p^*(z_2) & \text{for $z^*\geq z_1>z_2\geq s^*$}.
\end{cases}
\]
The monotonicity can be intuitively explained as follows.
When the inventory level is high, the system manager should set low price to attract more demand,
and should raise price to earn higher profit as the inventory level decreases until it drops to $z^*$,
and then should reduce price to encourage more demand being backlogged,
so that the firm can quickly reach the reorder repoint $s^*$
incurring low shortage cost.
An interesting phenomenon is that a lower price is offered when more demand is backlogged.
This phenomenon can be found in the group-buying market.
For example (see \cite{AnandAron2003}), a group-buying site e.conomy, sells indirect goods and service to customers,
who would be backlogged when placing orders.
The site offers lower price when more customers place their orders before the times when
the site orders goods and service to satisfy the customers.
In addition, notice that in \cite{ChaoZhou2006}'s model with Poisson demand, the highest price is offered at inventory level zero,
where the lowest holding/shortage cost is incurred.
However, in our model with Brownian motion demand, it is offered at a negative inventory level $z^*$.
Intuitively, because of the possibility of negative demand (i.e. customers' return) in Brownian motion demand,
when the highest price is offered at a little negative inventory level,
the possibility that the inventory level fluctuates around level zero would be higher,
and thus this incurs less holding/shortage cost and then brings higher profits.
We believe that similar phenomenon would happen if the customers' return feature is added to the Poisson demand model.

In our paper, $p^*(z)=p_{\pi}(w(z))$ is in a very general formula and it includes both discrete and continuous cases;
see the following examples.
\begin{example}
If
\[
\mu(p)=\frac{\lambda_1}{p+\lambda_0}
\]
with $\lambda_0$ and $\lambda_1$ are positive constants; see \cite{ZhangZhang2012}.
Then we have
\[
\Pi(p,w^*(z))=\lambda_1-\frac{\lambda_1(w^*(z)+\lambda_0)}{p+\lambda_0},
\]
which implies that
\[
p_{\pi}(w^*(z))=
\begin{cases}
\bar{p} & \text{for $w^*(z)\geq -\lambda_0$},\\
\underline{p} & \text{for $w^*(z)<-\lambda_0$}.
\end{cases}
\]
Thus, $w^*(S^*)=k$ in \eqref{eq:w(s)=w(s)=k},
$\lim_{z\to\infty} w^*(z)=-\infty$ in \eqref{eq:limw=},
$\mathrm{d}w^*(z)/\mathrm{d}z<0$ for $z>z^*$ in \eqref{eq:w-z*}
and $z^*<S^*$ in Proposition \ref{prop:w} imply
that there exists a unique $z_{p}\in(S^*,\infty)$ such that
\[
w^*(z_{p})=-\lambda_0,
\]
and then
\[
p^*(z)=
\begin{cases}
\bar{p} & \text{for $z\in[s^*,z_{p}]$},\\
\underline{p} &\text{for $z\in(z_{p},\infty)$}.
\end{cases}
\]
In this example, the optimal pricing is not continuously modified; see Figure \ref{fig:price} (a).

\end{example}

\begin{figure}[tb]
\subfigure[Example 1: discrete price adjustment]
  {\includegraphics[width=7cm]{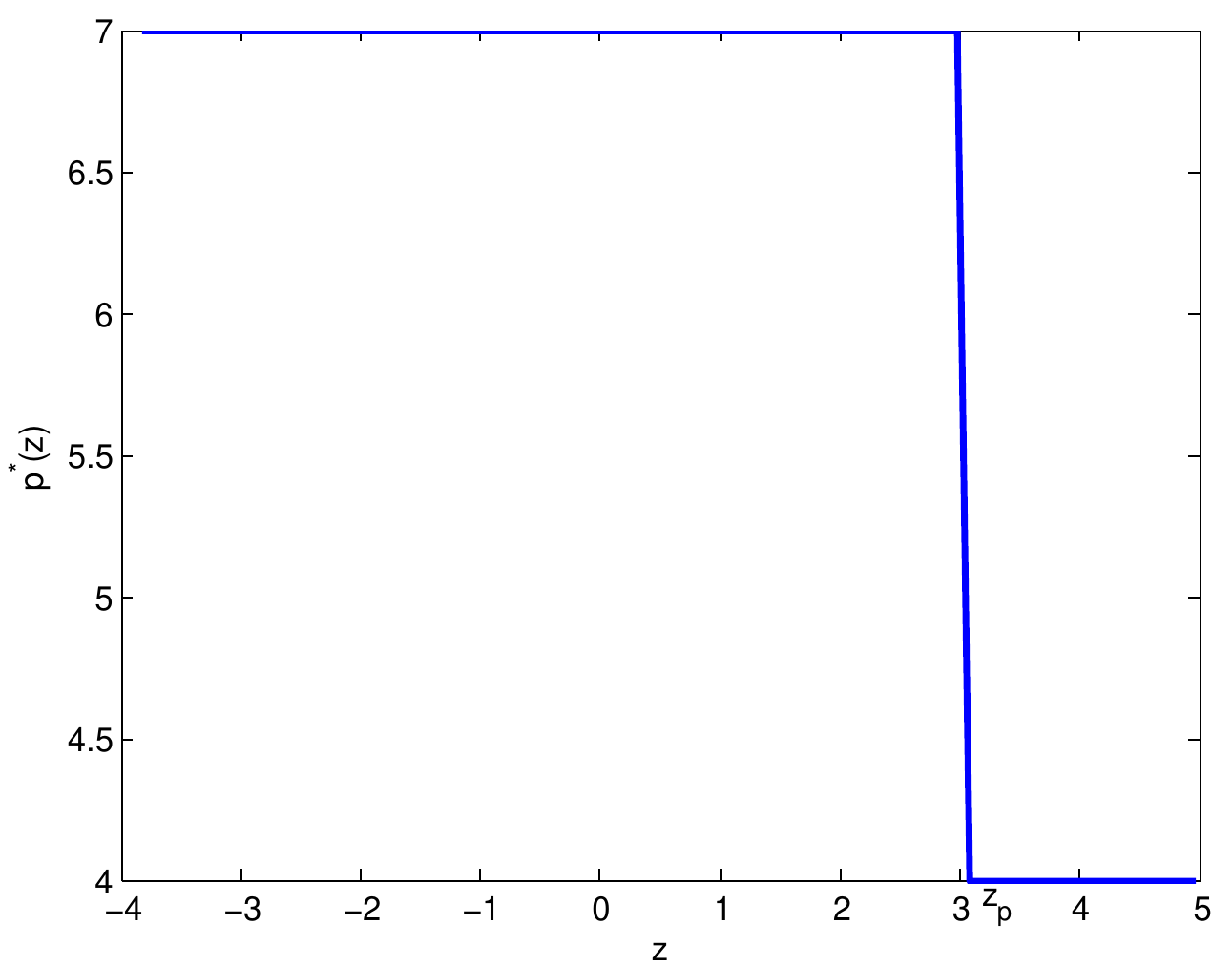}}
\subfigure[Example 2: continuous price adjustment]
   {\includegraphics[width=7cm]{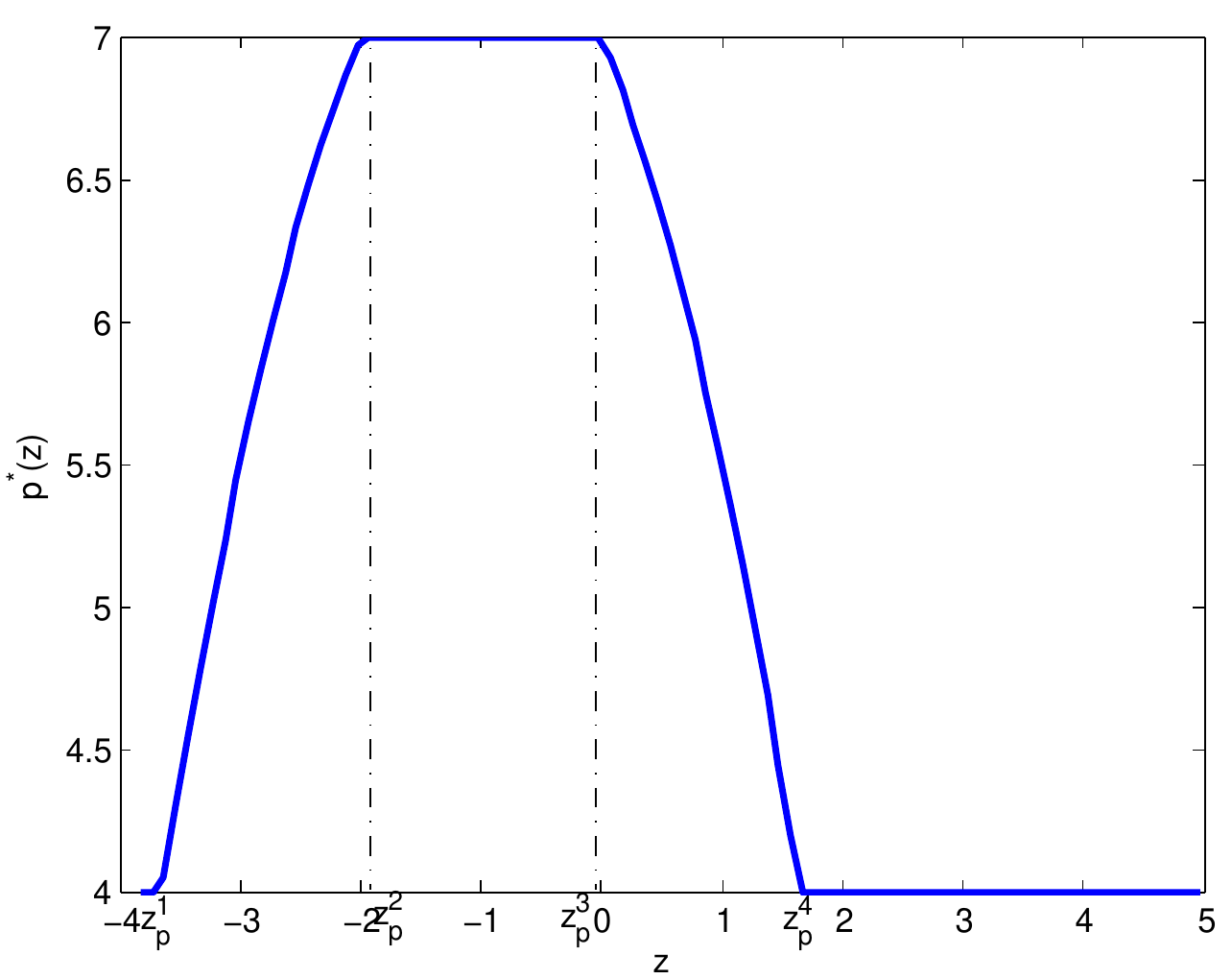}}
\caption{Discrete price adjustment and continuous price adjustment}
\label{fig:price}
\end{figure}

\begin{example}
If
\[
\mu(p)=A-p
\]
with $A>\bar{p}$.
Then we have
\[
\Pi(p,w^*(z))=-p^2+(A+w^*(z))p-Aw^*(z),
\]
which implies that
\[
p_{\pi}(w^*(z))=
\begin{cases}
\bar{p} & \text{for $w^*(z)\geq 2\bar{p}-A$},\\
\frac{A+w^*(z)}{2} & \text{for $2\underline{p}-A<w^*(z)<2\bar{p}-A$},\\
\underline{p} & \text{for $w^*(z)\leq 2\underline{p}-A$}.
\end{cases}
\]
Assume that $2\bar{p}-w^*(z^*)<A<2\underline{p}-k$, then
$w^*(z^*)>2\bar{p}-A$ and $w^*(s^*)=k<2\underline{p}-A$.
Proposition \ref{prop:w} implies that there exist $z_p^i$, $i=1,\cdots,4$,
with $s^*<z_p^1<z_p^2<z^*<z_p^3<z_p^4<S^*$ such that
\[
w^*(z_p^1)=w^*(z_p^4)=2\underline{p}-A\quad \text{and}\quad
w^*(z_p^2)=w^*(z_p^3)=2\bar{p}-A.
\]
Then
\[
p^*(z)=
\begin{cases}
\underline{p} & \text{for $z\in[z_p^4,\infty)$},\\
\frac{A+w^*(z)}{2} & \text{for $z\in(z_p^3,z_p^4)$},\\
\bar{p} & \text{for $z\in[z_p^2,z_p^3]$},\\
\frac{A+w^*(z)}{2} & \text{for $z\in(z_p^1,z_p^2)$},\\
\underline{p} & \text{for $z\in[s^*,z_p^1]$}.
\end{cases}
\]
In this example, the optimal price is continuously modified; see Figure \ref{fig:price} (b).
\end{example}

To prove our main results in Theorem \ref{thm:mainresults}, we use the following approach.
In Section \ref{sec:upperbound}, we establish an upper bound for the average profit
under any admissible policy $\phi\in\Phi$.
Then in Section \ref{sec:pricing}, we study the optimal pricing strategy for the given
$(s,S)$ policy.
Finally, in Section \ref{sec:inventorycontrol}
we show that if there exists an $(s^*,S^*)$ policy with a function $w^*$
and a constant $\gamma^*$ satisfying \eqref{eq:w-ODE}-\eqref{eq:limw'/h},
then $\gamma^*$ is the average profit under $(s^*,S^*)$ inventory control strategy
 with associated optimal pricing strategy $\bm{p}^*$
and it can achieve the upper bound in Section \ref{sec:upperbound}.
Thus $((s^*,S^*),\bm{p}^*)$ is an optimal policy
and $\gamma^*$ is the optimal average profit for our problem \eqref{eq:supAP}.
Then, we complete the proof of Theorem \ref{thm:mainresults}.

\section{Upper bound}
\label{sec:upperbound}

In this section, using the It\^{o} rule, we will establish an upper bound for the average profit
under any admissible policy.
We first need the following lemma.
\begin{lemma}[\textbf{\cite{HeYaoZhang2016} Lemma 3}]
\label{lem:HeYaoZhang}
Let $f(\cdot)\in\mathcal{C}^1(\R)$ and $Z$ be the inventory process
given by \eqref{eq:Z} under an admissible inventory control strategy $\bm{y}$.
Assume that there exist positive number $a_0$, $a_1$ and a positive integer $\ell$
such that
\[
\abs{f'(z)}<a_0+a_1\abs{z}^{\ell} \quad \text{for $z\in\R$}.
\]
Then we have
\begin{eqnarray}
&&\E_x[\abs{f(Z_t)}]<\infty,\label{eq:Ef<}\\
&&\E_x\Big[\int_0^tf'(Z_u)^2\,\mathrm{d}u\Big]<\infty,\label{eq:Eint<}\\
&& \lim_{t\to\infty} \frac{\E_x[\abs{f(Z_t)1_{\{Z_t\geq0\}}}]}{t}=0.\label{eq:lim=0}
\end{eqnarray}
\end{lemma}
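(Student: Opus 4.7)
\medskip
\noindent\textbf{Proof plan for Lemma \ref{lem:HeYaoZhang}.}

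First I would promote the polynomial bound on $f'$ to a polynomial bound on $f$ itself. Writing $f(z)=f(0)+\int_0^z f'(y)\,\mathrm{d}y$ and invoking the hypothesis gives
\[
\abs{f(z)}\leq \abs{f(0)}+a_0\abs{z}+\tfrac{a_1}{\ell+1}\abs{z}^{\ell+1}\leq b_0+b_1\abs{z}^{\ell+1}
\]
for suitable constants $b_0,b_1>0$. This reduces every assertion to moment estimates on $Z_t$.

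Next I would establish that for every positive integer $m$,
\[
\E_x\bigl[\abs{Z_t}^m\bigr]\leq A_m(1+t^m),\qquad \sup_{u\in[0,t]}\E_x\bigl[\abs{Z_u}^m\bigr]<\infty,
\]
by decomposing $Z_t=x+y_t-D_t$ and bounding the pieces separately. Because $\mu$ is continuous on the compact interval $[\underline p,\bar p]$ (Assumption \ref{assumption-mu}), the drift $\mu(p_u)$ is uniformly bounded, so $D_t$ inherits all polynomial moments with the stated growth. For the cumulative orders $y_t$, admissibility (iii) forces $y$ to stop increasing whenever $Z>C_{\bm{y}}$; together with $Z_t\geq -D_t+x+y_t$ this pins the negative excursions of $Z_t$ to $D_t$ up to an additive constant, and the positive excursions are generated only by jumps initiated from the region $\{Z_{t-}\leq C_{\bm{y}}\}$ and the downward drift afterwards. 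Applying these bounds, claim \eqref{eq:Ef<} is immediate from $\abs{f(z)}\leq b_0+b_1\abs{z}^{\ell+1}$ and the $(\ell{+}1)$-st moment bound on $Z_t$, while \eqref{eq:Eint<} follows by writing $f'(Z_u)^2\leq 2a_0^2+2a_1^2\abs{Z_u}^{2\ell}$ and using Fubini with the uniform $2\ell$-th moment bound of $Z_u$ on $[0,t]$.

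The delicate claim is \eqref{eq:lim=0}, and this is where I expect the real work. A linear-in-$t$ moment bound is not enough: dividing by $t$ would not tend to zero. One needs the stronger statement
\[
\sup_{t\geq 0}\E_x\bigl[(Z_t^+)^{\ell+1}\bigr]<\infty.
\]
The idea is that the positive part of $Z$ is tamed by the \emph{negative} drift of the netput process: whenever $Z$ is above $C_{\bm{y}}$, no orders are placed and $Z$ evolves as a Brownian motion with strictly negative drift $-\mu(p_u)\leq -\mu(\bar p)<0$, so $Z_t^+$ is dominated in distribution by the all-time supremum of such a drifted Brownian motion started from the most recent post-order level or from $x$, and such a supremum has finite exponential (hence polynomial) moments that do not depend on $t$. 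Making this comparison rigorous, by regenerating at the last visit to the ordering region before $t$ and using the strong Markov property, yields the uniform moment bound above. Combined with $\abs{f(z)}\leq b_0+b_1\abs{z}^{\ell+1}$ this gives $\E_x[\abs{f(Z_t)1_{\{Z_t\geq 0\}}}]=O(1)$ uniformly in $t$, and dividing by $t$ and letting $t\to\infty$ yields \eqref{eq:lim=0}. The main obstacle will be precisely this uniform-in-$t$ upper moment control on $Z_t^+$, since the jump sizes $\xi_i$ are not themselves assumed bounded and the argument must use both the control-region geometry enforced by admissibility (iii) and the negative drift to rule out persistent high excursions.
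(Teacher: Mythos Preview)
The paper does not give its own proof of this lemma: it is imported as Lemma~3 of \cite{HeYaoZhang2016} and used as a black box, so there is no in-paper argument to compare your plan against.

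On substance, your reduction to polynomial moments of $Z_t$ is the right move and disposes of \eqref{eq:Ef<} and \eqref{eq:Eint<} without difficulty. You have also correctly isolated \eqref{eq:lim=0} as the nontrivial claim and named the right mechanism---negative drift above $C_{\bm{y}}$ plus a last-exit decomposition. The one point that does not close is your uniform bound $\sup_{t\ge 0}\E_x[(Z_t^+)^{\ell+1}]<\infty$. Under the literal wording of admissibility~(iii) in this paper a policy is only forbidden to order \emph{from} levels above $C_{\bm{y}}$; nothing caps the post-order level. A policy that reorders whenever $Z$ hits $0$ and orders up to level $n$ at its $n$th reorder is then admissible (take $C_{\bm{y}}=0$), yet has $Z_{\tau_n}=n$ with $\tau_n$ of order $n^2$, so $\E_x[(Z_t^+)^{\ell+1}]$ is of order $t^{(\ell+1)/2}$ and \eqref{eq:lim=0} fails for every positive integer $\ell$. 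Your negative-drift comparison therefore needs the additional hypothesis that the post-order inventory level is itself bounded; this is the natural reading under which the source lemma goes through, and it is harmless for the application here since any policy ordering to unbounded heights has unbounded average holding cost and hence ${\sf AP}=-\infty$, so one may restrict to policies with bounded post-order levels before invoking Proposition~\ref{prop:upperbound}.
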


Based on Lemma 3.1 above, we derive an upper bound for the average profit
under any admissible policy.
\begin{proposition}
\label{prop:upperbound}
Assume that $h$ satisfies Assumption \ref{assump:h-qd}.
Suppose that there exists a function
$f(\cdot)\in\mathcal{C}^1(\R)$ with an absolutely continuous derivate $f'$,
and a constant $\gamma$ such that the following conditions hold
\begin{align}
&\frac{\sigma^2}{2}f''(z)+\Pi(p,f'(z))-h(z)\leq \gamma\nonumber\\
&\qquad\mbox{for any $z\in\R$ such that $f''$ exists
and for any $p\in[\underline{p},\bar{p}]$},\label{eq:lbPoission}\\
& f(z_1)-f(z_2)\leq K+k\cdot(z_1-z_2)\quad\mbox{for $-\infty<z_2<z_1<\infty$},\label{eq:lbK} \\
& \abs{f'(z)}<
\begin{cases}
a_0 &\text{for $z\leq0$},\\
a_0+a_1 z^{\ell} & \text{for $z\geq0$},
\end{cases}
\label{eq:lbf}
\end{align}
where $\Pi$ is defined in \eqref{eq:Pi},
$a_0$ and $a_1$ are some positive numbers and $\ell$ is some positive integer.
Then we have
\begin{equation}
\label{eq:AP<gamma}
{\sf AP}(x,\phi)\leq \gamma
\end{equation}
for each initial state $x\in \mathbb{R}$ and
each admissible policy $\phi\in \Phi$.
\end{proposition}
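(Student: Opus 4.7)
My plan is to apply a generalized (Meyer--It\^o) formula to $f(Z_t)$ and then use the three hypotheses to bound the drift, diffusion, and jump contributions, obtaining a pathwise inequality that relates the cumulative profit to $\gamma t$ plus a martingale and a boundary term. Since $f\in\mathcal{C}^1(\R)$ has an absolutely continuous derivative $f'$, and $Z$ is a Brownian semimartingale with impulse jumps $\Delta Z_{\tau_i}=\xi_i\ge 0$ at the ordering times $\tau_i$, It\^o's formula gives
\[
f(Z_t)-f(x) \;=\; -\int_0^t f'(Z_u)\mu(p_u)\,\mathrm{d}u - \sigma\int_0^t f'(Z_u)\,\mathrm{d}B_u + \frac{\sigma^2}{2}\int_0^t f''(Z_u)\,\mathrm{d}u + \sum_{i=0}^{N_t}\bigl[f(Z_{\tau_i})-f(Z_{\tau_i-})\bigr].
\]
The continuous piece is controlled by \eqref{eq:lbPoission} applied with $p=p_u$: since $\Pi(p_u,f'(Z_u))=p_u\mu(p_u)-\mu(p_u)f'(Z_u)$, the inequality rearranges to $\tfrac{\sigma^2}{2}f''(Z_u)-\mu(p_u)f'(Z_u)\le \gamma - p_u\mu(p_u)+h(Z_u)$. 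The jumps are controlled by \eqref{eq:lbK} applied to $(z_1,z_2)=(Z_{\tau_i},Z_{\tau_i-})$, giving $f(Z_{\tau_i})-f(Z_{\tau_i-})\le K+k\xi_i$.

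Combining and rearranging yields the key pathwise bound
\[
\int_0^t\bigl[p_u\mu(p_u)-h(Z_u)\bigr]\mathrm{d}u - \sum_{i=0}^{N_t}(K+k\xi_i) \;\le\; \gamma t + f(x) - f(Z_t) - \sigma\int_0^t f'(Z_u)\,\mathrm{d}B_u.
\]
Taking $\E_x$: the growth condition \eqref{eq:lbf} verifies the polynomial bound needed for Lemma \ref{lem:HeYaoZhang}, so by \eqref{eq:Eint<} the stochastic integral is a true martingale with zero mean. Dividing by $t$ and passing to $\liminf_{t\to\infty}$ then gives
\[
{\sf AP}(x,\phi) \;\le\; \gamma - \limsup_{t\to\infty}\frac{\E_x[f(Z_t)]}{t}.
\]

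To close the argument I would establish $\liminf_{t\to\infty}\E_x[f(Z_t)]/t\ge 0$ by splitting according to the sign of $Z_t$. On $\{Z_t\ge 0\}$, equation \eqref{eq:lim=0} of Lemma \ref{lem:HeYaoZhang} directly gives $\E_x[f(Z_t)1_{\{Z_t\ge 0\}}]/t\to 0$. On $\{Z_t<0\}$, the first line of \eqref{eq:lbf} makes $f$ Lipschitz on $(-\infty,0]$ with constant $a_0$, so $-f(Z_t)1_{\{Z_t<0\}}\le |f(0)|+a_0|Z_t|1_{\{Z_t<0\}}$, reducing matters to controlling $\E_x[|Z_t|1_{\{Z_t<0\}}]/t$.

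The main obstacle is this negative-excursion term. I would assume without loss of generality that ${\sf AP}(x,\phi)>-\infty$ (otherwise the proposition is trivial), and then invoke \eqref{eq:h-p-q}: finiteness of ${\sf AP}(x,\phi)$ combined with the bounded revenue rate $p\mu(p)$ forces $\E_x[\int_0^t h(Z_u)\,\mathrm{d}u]/t$ to remain uniformly bounded, and $h(z)\ge d_1|z|+d_2$ then bounds $\E_x[\int_0^t |Z_u|\,\mathrm{d}u]/t$. Converting this averaged estimate into the required pointwise control of $\E_x[|Z_t|1_{\{Z_t<0\}}]/t$ — passing along a subsequence realizing the $\liminf$ in the definition of ${\sf AP}$, and using the admissibility condition (iii) to rule out pathological drift — is the delicate step that makes the upper-bound technique work.
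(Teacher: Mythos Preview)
Your It\^o decomposition, the use of \eqref{eq:lbPoission} on the drift, \eqref{eq:lbK} on the jumps, and the appeal to Lemma~\ref{lem:HeYaoZhang} for the martingale term are exactly the paper's argument, and you arrive correctly at
\[
{\sf AP}(x,\phi)\;\le\;\gamma-\limsup_{t\to\infty}\frac{\E_x[f(Z_t)]}{t}.
\]
The divergence from the paper is entirely in how you dispose of the boundary term, and that is where your proposal has a real gap.

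You try to prove $\liminf_{t\to\infty}\E_x[f(Z_t)]/t\ge 0$ directly by assuming ${\sf AP}(x,\phi)>-\infty$, deducing from \eqref{eq:h-p-q} that $t^{-1}\E_x\!\int_0^t|Z_u|\,\mathrm{d}u$ stays bounded, and then hoping to upgrade this time-averaged bound to a pointwise bound on $\E_x[|Z_t|1_{\{Z_t<0\}}]/t$. That upgrade is the wrong direction: a bounded Ces\`aro average does not prevent $\E_x[|Z_t|]$ from being large along a subsequence, and the admissibility condition (iii) does not help here because it only caps the inventory from above when an order is placed. You yourself flag this as ``delicate,'' but it is not merely delicate---without additional structure it need not hold.

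The paper avoids the issue by running the implication in the easy direction. It argues by contradiction: if $\limsup_{t\to\infty}\E_x[f(Z_t)]/t<-\delta<0$, then (using \eqref{eq:lim=0} to strip off the $\{Z_t\ge 0\}$ part and the Lipschitz bound from \eqref{eq:lbf} on $\{Z_t<0\}$) one gets $\E_x[|Z_t|]\ge c\,t$ for all large $t$. This is a \emph{pointwise} lower bound, which integrates trivially to make $t^{-1}\E_x\!\int_0^t|Z_u|\,\mathrm{d}u\to\infty$, and then \eqref{eq:h-p-q} forces ${\sf AP}(x,\phi)=-\infty\le\gamma$. So the dichotomy ``either $\limsup\ge 0$ or ${\sf AP}=-\infty$'' closes the proof with no averaged-to-pointwise conversion required. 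Replace your final paragraph with this contradiction argument and the proof is complete.
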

\begin{proof}
Using \eqref{eq:Z} and the It\^{o} rule (see e.g., Lemma 3.1 in \cite{DaiYao2013a}), we have
\begin{align*}
f(Z_t)=f(x)+ \int_0^t \big[\frac{1}{2}\sigma^2 f''(Z_u)-\mu(p_u)f'(Z_u)\big]\,\mathrm{d}u
-\sigma\int_0^t f'(Z_u)\,\mathrm{d}B_u
+ \sum_{0\leq u\leq t}\Delta f(Z_u),
\end{align*}
which, together with \eqref{eq:lbPoission}-\eqref{eq:lbK} and $p_t\in[\underline{p},\bar{p}]$ for any $t\geq0$,
implies that
\begin{align}
f(Z_t)\leq f(x)+\gamma t-\int_0^t \big[p_u\mu(p_u)-h(Z_u)\big]\,\mathrm{d}u
-\sigma\int_0^t f'(Z_u)\,\mathrm{d}B_u
+\sum_{i=0}^{N_t}\big(K+k\xi_i\big).\label{eq:f<}
\end{align}
It follows from \eqref{eq:Eint<} and Theorem 3.2.1 in \cite{Oksendal2003} that
\[
\E_x\Big[\int_0^t f'(Z_u)\,\mathrm{d}B_u\Big]=0.
\]
Then taking expectation on both sides of \eqref{eq:f<} and using \eqref{eq:Ef<}, we have
\begin{equation*}
\E_x[f(Z_t)]+\E_x\Big[\int_0^t \big[p_u\mu(p_u)-h(Z_u)\big]\,\mathrm{d}u-\sum_{i=0}^{N_t}\big(K+k\xi_i\big)\Big]
\leq f(x)+\gamma t,
\end{equation*}
which yields
\[
\limsup_{t\to\infty} \frac{\E_x[f(Z_t)]}{t} + \AP(x,\phi)\leq\gamma.
\]
Thus, \eqref{eq:AP<gamma} holds when
\[
\limsup_{t\to\infty} \frac{\E_x[f(Z_t)]}{t}\geq 0.
\]
Otherwise, there exists a constant $\delta>0$ such that
\begin{equation*}
\limsup_{t\to\infty} \frac{\E_x[f(Z_t)]}{t}<-\delta,
\end{equation*}
which, together with \eqref{eq:lim=0}, implies that
\[
\limsup_{t\to\infty} \frac{\E_x[f(Z_t)1_{\{Z_t<0\}}]}{t}<-\delta.
\]
Thus, we have
\[
\E_x[f(Z_t)1_{\{Z_t<0\}}]<-\frac{\delta t}{2}
\]
for $t$ sufficiently large. It follows from \eqref{eq:lbf} that
there exists some constant $a_2>0$ such that
$f(z)\geq a_0z-a_2$ for $z<0$.
Then, we must have
\[
\E_x[\abs{Z_t}]\geq \frac{\delta t-2a_2}{-2a_0},
\]
which, together with \eqref{eq:h-p-q}, implies that
\begin{align*}
\AP(x,\phi)&\leq \liminf_{t\to\infty}\frac{1}{t}\E_x
\Big[\int_0^t \big[p_u\mu(p_u)-h(Z_u)\big]\,\mathrm{d}u\Big]\\
&\leq\kappa-\liminf_{t\to\infty}\frac{1}{t}\E_x \Big[\int_0^t h(Z_u)\,\mathrm{d}u\Big]\\
&\leq\kappa-d_1\liminf_{t\to\infty}\frac{1}{t}\E_x\Big[\int_0^t \abs{Z_u}\,\mathrm{d}u\Big]-d_2\\
&=-\infty,
\end{align*}
where $\kappa\triangleq\max_{p\in[\underline{p},\bar{p}]}p\mu(p)<\infty$.
Hence, we must have \eqref{eq:AP<gamma}.
\end{proof}

\section{Optimal pricing strategy under the given $(s,S)$ policy}
\label{sec:pricing}

In this section, we analyze that the optimal pricing strategy under the given $(s,S)$ policy
can be characterized by \eqref{eq:pricing}.
Specifically, under the given $(s,S)$ policy, it follows from \eqref{eq:Z}, \eqref{eq:sS-0} and \eqref{eq:sS-i} that
the inventory level $Z$ will be derived as
\begin{equation*}
\label{eq:Z-sS}
Z_t=x-\int_0^t \mu(p_u)\,\mathrm{d}u -\sigma B_t+(S-x)1_{\{x\leq s\}}+N_t(S-s),\quad t\geq0,
\end{equation*}
and then the pricing strategy $\bm{p}_{(s,S)}=\{p_t=p_{\pi}(w_{(s,S)}(Z_t)):t\geq0\}$ in \eqref{eq:pricing} is defined well for given function $w$.
Next, we first, in Theorem \ref{thm:sSpolicy}, prove the optimality of pricing strategy defined in \eqref{eq:pricing} under the given $(s,S)$ policy,
and then in Lemma \ref{lemma:price}, we give some useful properties
for this class of pricing strategy.

\begin{theorem}
\label{thm:sSpolicy}
Under Assumption \ref{assumption-mu},
given $(s,S)$ policy, if there exists $(w_{(s,S)}(\cdot),\gamma_{(s,S)})$,
where $w_{(s,S)}(z)$ is continuously differentiable in $\R$ and twice continuously differentiable in $\R\setminus \{0\}$
and  $\gamma_{(s,S)}$ is a constant,
satisfying ODE \eqref{eq:w-ODE}
with boundary conditions \eqref{eq:w-int=K} and \eqref{eq:limw'/h}.
then $\bm{p}_{(s,S)}$ is the optimal pricing strategy
and $\gamma_{(s,S)}$ is the associated long-run average profit.
\end{theorem}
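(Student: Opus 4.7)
The plan is to reduce Theorem \ref{thm:sSpolicy} to a direct application of Proposition \ref{prop:upperbound} with the explicit test function $f(z)=\int_{s}^{z} w_{(s,S)}(y)\,\mathrm{d}y$, and then to show that the candidate pricing $\bm{p}_{(s,S)}$ saturates every inequality used in that bound. The argument splits naturally into an upper-bound half and a matching lower-bound half.

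For the upper bound I would set $f(z)=\int_{s}^{z} w_{(s,S)}(y)\,\mathrm{d}y$, so that $f\in\mathcal{C}^1(\R)$ with $f'=w_{(s,S)}$ and $f''=w'_{(s,S)}$ away from the point $0$. The ODE \eqref{eq:w-ODE} then reads $\tfrac{\sigma^2}{2}f''(z)+\pi(f'(z))-h(z)=\gamma_{(s,S)}$, and since $\pi(w)=\max_{p}\Pi(p,w)\geq \Pi(p,w)$ for every $p\in[\underline{p},\bar{p}]$, condition \eqref{eq:lbPoission} holds. The growth condition \eqref{eq:lbf} follows from \eqref{eq:limw'/h} together with Assumption \ref{assump:h-qd}(d), which yields polynomial control of $w_{(s,S)}$ at infinity. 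Condition \eqref{eq:lbK} need not hold for arbitrary $z_2<z_1$, but under the fixed $(s,S)$ policy the only jumps of $Z$ (after a possible initial one) are from $s$ to $S$, and \eqref{eq:w-int=K} gives exactly $f(S)-f(s)=K+k(S-s)$; thus the Itô-sum accounting of ordering costs in the proof of Proposition \ref{prop:upperbound} goes through with equality at every regular jump. Replaying that proof then delivers ${\sf AP}(x,\phi)\leq \gamma_{(s,S)}$ for every admissible pricing $\bm{p}$ combined with the given $(s,S)$ policy; as in Proposition \ref{prop:upperbound}, the contingency $\limsup_{t\to\infty}\mathbb{E}_x[f(Z_t)]/t<0$ is excluded via \eqref{eq:h-p-q}, which forces ${\sf AP}(x,\phi)=-\infty$.

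For the matching lower bound I would insert $p_u=p_{\pi}(w_{(s,S)}(Z_u))$, i.e.\ take $\bm{p}=\bm{p}_{(s,S)}$. By construction $\Pi(p_u,f'(Z_u))=\pi(f'(Z_u))$ for all $u$, so the inequality in \eqref{eq:lbPoission} becomes an equality, and by \eqref{eq:w-int=K} each regular jump of $f$ equals the corresponding $K+k(S-s)$ ordering cost. Taking expectations in the Itô expansion, using Lemma \ref{lem:HeYaoZhang} to kill the stochastic integral and dividing by $t$, one obtains ${\sf AP}(x,((s,S),\bm{p}_{(s,S)}))=\gamma_{(s,S)}-\lim_{t\to\infty}\mathbb{E}_x[f(Z_t)]/t$, so it remains to show that this last limit vanishes. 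Under $((s,S),\bm{p}_{(s,S)})$ the process $Z$ is regenerative between successive returns to $S$ with a cycle length of finite mean, and the polynomial growth of $f$ inherited from \eqref{eq:limw'/h} is controlled by standard moment estimates for regulated Brownian motion with bounded drift, giving $\mathbb{E}_x[f(Z_t)]/t\to 0$.

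The main obstacle I anticipate is precisely this integrability/tightness step: one must verify enough moment control on $f(Z_t)$ (equivalently, on $w_{(s,S)}(Z_t)$) under the price feedback to justify both the vanishing of $\mathbb{E}_x[f(Z_t)]/t$ and the hypotheses of Lemma \ref{lem:HeYaoZhang}. All the ingredients are in place: \eqref{eq:limw'/h} controls the growth of $w'_{(s,S)}$ and hence of $w_{(s,S)}$; Assumption \ref{assump:h-qd}(d) bounds $h$; the $(s,S)$ feedback keeps $Z_t\geq s$ for $t>0$; and $\mu(p_u)$ is uniformly bounded in $u$. Stitching these estimates together carefully is the only technically delicate part; once the test function $f$ is identified, the rest of the proof is an essentially algebraic rewriting of the Itô expansion.
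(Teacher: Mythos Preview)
Your proposal is correct and follows essentially the same route as the paper: define the test function $V(z)=\int_s^z w_{(s,S)}(y)\,\mathrm{d}y$, apply It\^{o}'s formula, use $\Pi(p,V')\le\pi(V')$ for the upper bound over all pricings and equality under $\bm{p}_{(s,S)}$, and invoke the jump identity \eqref{eq:w-int=K} for the ordering-cost accounting.

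The only noteworthy difference is in how you dispatch the term $\E_x[f(Z_t)]/t$. You propose a regenerative argument for the equality case and the contingency route of Proposition~\ref{prop:upperbound} for the upper bound. The paper instead appeals directly to Lemma~\ref{lem:HeYaoZhang}: condition \eqref{eq:limw'/h} plus Assumption~\ref{assump:h-qd}(d) put $V$ in the scope of that lemma, so \eqref{eq:lim=0} controls $\E_x[|V(Z_t)|1_{\{Z_t\ge0\}}]/t$, while the $(s,S)$ feedback forces $Z_t\ge s$ for $t>0$, making $|V(Z_t)|1_{\{Z_t<0\}}$ uniformly bounded. This kills the limit for \emph{every} admissible pricing simultaneously, so neither the regenerative structure nor the $-\infty$ contingency is needed. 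Your argument works, but this shortcut is cleaner and avoids the moment estimates you flag as the delicate part.
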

\begin{proof}
We first prove that $\gamma_{(s,S)}$ is the average profit under $(s,S)$ policy and
pricing strategy $\bm{p}_{(s,S)}$.
Define
\[
V(z)=\int_{s}^{z}w_{(s,S)}(u)\,\mathrm{d}u.
\]
Thus, \eqref{eq:w-ODE}, \eqref{eq:w-int=K} and \eqref{eq:limw'/h} imply that $V$ satisfies
\begin{align}
\label{eq:V-ODE}
\frac{\sigma^2}{2}V''(z)+\pi(V'(z))-h(z)=\gamma_{(s,S)} \quad\text{for $z\in\R$}
\end{align}
with boundary conditions
\begin{align}
&V(S)-V(s)=K+k\cdot(S-s),\label{eq:V-int=K}\\
&\lim_{z\to\infty}V''(z)/h(z)=0.\label{eq:limV/z}
\end{align}
Using the It\^{o} rule, we have
\begin{align*}
V(Z_t)=V(Z_0)+ \int_0^t \big[\frac{1}{2}\sigma^2V''(Z_u)-\mu(p_{\pi}(V'(Z_u)))V'(Z_u)\big]\,\mathrm{d}u
-\sigma\int_0^tV'(Z_u)\,\mathrm{d}B_u+ \sum_{0< u\leq t}\Delta V(Z_u),
\end{align*}
which, together with \eqref{eq:pi}, \eqref{eq:V-ODE} and \eqref{eq:V-int=K}, implies that
\begin{align}
V(Z_t)=&V(Z_0)+ \gamma_{(s,S)} t-\int_0^t \big[p_{\pi}(V'(Z_u))\cdot \mu(p_{\pi}(V'(Z_u)))
-h(Z_u)\big]\,\mathrm{d}u \nonumber\\
&-\sigma\int_0^tV'(Z_u)\,\mathrm{d}B_u
+\sum_{i=1}^{N_t}\big(K+k(S-s)\big).\label{eq:V(Z(t))=}
\end{align}
Furthermore, \eqref{eq:limV/z} and Assumption \ref{assump:h-qd} (d)
imply that $V$ satisfies the conditions in Lemma \ref{lem:HeYaoZhang},
thus we have
\[
\E_x[\abs{V(Z_t)}]<\infty\quad\text{and}\quad
\E_x\Big[\int_0^tV'(Z_u)^2\,\mathrm{d}u\Big]<\infty.
\]
Therefore, taking expectation on both sides of \eqref{eq:V(Z(t))=}, we have
\[
\AP(x,((s,S),\bm{p}_{(s,S)}))=\gamma_{(s,S)},
\]
i.e., $\gamma_{(s,S)}$ is the long-run average profit under
$(s,S)$ policy and pricing strategy $\bm{p}_{(s,S)}$.

It remains to prove that $\bm{p}_{(s,S)}$ is the optimal pricing strategy
under the given $(s,S)$ policy, i.e.,
\begin{align}
\label{eq:AP<gamma-sS}
\AP(x,((s,S),\bm{p}))\leq \gamma_{(s,S)} \quad \text{for any $\bm{p}$}.
\end{align}
Given any admissible pricing strategy $\bm{p}=\{p_t:t\geq0\}$,
it follows from the definition of $\pi$ in \eqref{eq:pi} that
\begin{align}
\frac{\sigma^2}{2}V''(z)-\mu(p)V'(z)+p\mu(p)-h(z)
\leq\frac{\sigma^2}{2}V''(z)+\pi(V'(z))-h(z)
=\gamma_{(s,S)}.\label{eq:gammasS-inequality}
\end{align}
Thus under any admissible pricing control $\bm{p}=\{p_t:t\geq0\}$ and using It\^{o} rule,
we have
\begin{align*}
V(Z_t)=V(Z_0)+ \int_0^t \big[\frac{1}{2}\sigma^2V''(Z_u)-\mu(p_s)V'(Z_u)\big]\,\mathrm{d}u
-\sigma\int_0^tV'(Z_u)\,\mathrm{d}B_u
+ \sum_{0< u\leq t}\Delta V(Z_u).
\end{align*}
Then it follows from \eqref{eq:gammasS-inequality} that
\begin{align*}
V(Z_t)\leq V(Z_0)+ \gamma_{(s,S)} t-\int_0^t \big[p_u\cdot \mu(p_u)
-h(Z_u)\big]\,\mathrm{d}u
-\sigma\int_0^tV'(Z_u)\,\mathrm{d}B_u
+\sum_{i=1}^{N_t}\big(K+k(S-s)\big).
\end{align*}
The following proof for \eqref{eq:AP<gamma-sS} will be same to that in the first paragraph except
$\leq$ instead of $=$ and thus it is omitted.
\end{proof}

The following lemma states some properties about the pricing strategy defined in \eqref{eq:pricing},
that will be used in next section. See Appendix \ref{appendix} for proof details.
\begin{lemma}
\label{lemma:price}
Under Assumption \ref{assumption-mu}, we have that
\begin{itemize}
\item[(1)] $p_{\pi}(w)$ is increasing in $w\in\R$.
\item[(2)] $\pi(w)$ is continuously differentiable and
\begin{equation}
\label{eq:pi'}
\pi'(w)< 0.
\end{equation}
Furthermore,
\begin{equation}
\label{eq:limpi}
\lim_{w\to-\infty}\pi(w)=+\infty\quad\text{and}\quad
\lim_{w\to\infty}\pi(w)=-\infty.
\end{equation}
\end{itemize}
\end{lemma}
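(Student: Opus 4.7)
My plan is to prove (1) by a revealed-preference argument on $\Pi(p,w)=\mu(p)(p-w)$, and (2) by exploiting that $\pi$ is a pointwise supremum of functions affine in $w$.

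For part (1), I fix $w_1<w_2$, set $p_i=p_\pi(w_i)$, and add the two optimality inequalities $\Pi(p_i,w_i)\ge\Pi(p_{3-i},w_i)$. After cancellation this reduces to $(w_2-w_1)\bigl(\mu(p_1)-\mu(p_2)\bigr)\ge 0$, hence $\mu(p_1)\ge\mu(p_2)$, and strict monotonicity of $\mu$ from Assumption~\ref{assumption-mu}(b) forces $p_1\le p_2$.

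For part (2), $\pi(w)=\sup_p\Pi(p,w)$ is a supremum of functions affine in $w$, hence convex; the envelope theorem (valid thanks to Assumption~\ref{assumption-mu}(a) and the affine dependence on $w$) gives $\pi'(w)=-\mu(p_\pi(w))<0$, which is (\ref{eq:pi'}). To upgrade to $\mathcal{C}^1$ regularity I would split $\R$ into open intervals on which $p_\pi(w)\in(\underline{p},\bar{p})$ (where the first-order condition $\mu(p)+\mu'(p)(p-w)=0$ combined with the implicit function theorem makes $p_\pi$ itself $\mathcal{C}^1$) and intervals on which $p_\pi(w)$ is pinned to a boundary (where $\pi$ is explicitly affine in $w$). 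For the limits in (\ref{eq:limpi}) the trivial lower bound $\pi(w)\ge\mu(\underline{p})(\underline{p}-w)$ delivers $\pi\to+\infty$ as $w\to-\infty$, and for $w>\bar{p}$ the upper bound
\[
\pi(w)\ \le\ \bigl(\min\nolimits_{[\underline{p},\bar{p}]}\mu\bigr)(\bar{p}-w),
\]
valid because $p-w<0$ for every admissible $p$ once $w>\bar{p}$, delivers $\pi\to-\infty$ as $w\to+\infty$.

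The main obstacle I foresee is the continuous-differentiability claim at transitions between an interior and a boundary branch, where $p_\pi$ itself may jump. My intended remedy is to show that any such transition $w_0$ occurs only where the interior first-order condition is satisfied at the boundary value of $p$, so that the envelope slopes $-\mu(p_\pi(w_0^-))$ and $-\mu(p_\pi(w_0^+))$ coincide and the two branches paste together smoothly; convexity of $\pi$ supplies the inequality $\pi'_-(w_0)\le\pi'_+(w_0)$, and the tangency forces equality. This smooth-pasting verification is the only delicate piece of the argument.
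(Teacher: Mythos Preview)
Your argument is essentially the same as the paper's, with only stylistic differences. For (1), the paper observes that $\partial^2\Pi/\partial p\,\partial w=-\mu'(p)>0$ and cites a Topkis-type monotone comparative statics theorem; your revealed-preference inequality is precisely the one-line proof of that theorem in the scalar case. For (2), both you and the paper obtain the envelope identity $\pi'(w)=-\mu(p_\pi(w))$ by separating the interior case (where the first-order condition $\partial_p\Pi=0$ kills the extra chain-rule term) from the boundary case, and both use the same lower bound $\pi(w)\ge\mu(\underline p)(\underline p-w)$ for the limit as $w\to-\infty$; your upper bound $\pi(w)\le\mu(\bar p)(\bar p-w)$ for $w>\bar p$ is a minor variant of the paper's $\pi(w)\le\max_p\mu(p)p-\mu(\bar p)w$. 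The paper in fact simply asserts that $\pi$ is $\mathcal C^1$ because $\Pi$ is $\mathcal C^2$, without discussing the transition issue you flag, so your smooth-pasting remark is more, not less, careful than the published proof.
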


\section{Optimal policy}
\label{sec:inventorycontrol}

In this section, we provide the proof for our main results, Theorem \ref{thm:mainresults}.
Before doing that, we first prove some important properties of $w^*$ satisfying
ODE \eqref{eq:w-ODE} with boundary conditions \eqref{eq:w-int=K}-\eqref{eq:limw'/h}.

\begin{proposition}
\label{prop:w}
Under Assumption \ref{assumption-mu} and \ref{assump:h-qd},
if there exists $((s^*,S^*),w^*(\cdot),\gamma^*)$ satisfying the assumptions in Theorem \ref{thm:mainresults},
then we have that
\begin{equation}
\label{eq:limw=}
\lim_{z\to\infty}w^*(z)=-\infty.
\end{equation}
and there exists a unique $z^*$ with $z^*\leq0$ and $s^*<z^*<S^*$ such that
\begin{equation}
\label{eq:w-z*}
\frac{\mathrm{d}w^*(z)}{\mathrm{d}z}
\left\{
\begin{array}{ll}
>0 & \text{for $z<z^*$},\\
=0 & \text{for $z=z^*$},\\
<0 & \text{for $z>z^*$}.
\end{array}
\right.
\end{equation}
\end{proposition}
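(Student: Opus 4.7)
The plan is to exploit the ODE \eqref{eq:w-ODE} rewritten as
\[
w^{*\prime}(z) = \frac{2}{\sigma^{2}}\bigl[\gamma^{*} + h(z) - \pi(w^{*}(z))\bigr],
\]
together with its once-differentiated version, valid on $\R\setminus\{0\}$,
\[
\frac{\sigma^{2}}{2}\, w^{*\prime\prime}(z) + \pi'(w^{*}(z))\, w^{*\prime}(z) = h'(z),
\]
which yields the pivotal identity $w^{*\prime\prime}(z_{0}) = (2/\sigma^{2})\,h'(z_{0})$ at every critical point $z_{0}\neq 0$ of $w^{*}$. Combined with \eqref{eq:h'}, this produces the sign dichotomy that drives the whole argument: every critical point of $w^{*}$ in $(-\infty,0)$ is a \emph{strict} local maximum, while every critical point in $(0,\infty)$ is a strict local minimum. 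At $z_{0}=0$ the identity holds only as one-sided limits, giving $w^{*\prime\prime}(0^{-})<0<w^{*\prime\prime}(0^{+})$ via \eqref{eq:h'}.

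For \eqref{eq:limw=}, I divide the ODE by $h(z)$ and let $z\to\infty$: by \eqref{eq:limw'/h} and $h(z)\to\infty$ one obtains $\pi(w^{*}(z))/h(z)\to 1$, hence $\pi(w^{*}(z))\to+\infty$. Lemma \ref{lemma:price}(2) together with the strict monotonicity \eqref{eq:pi'} then makes $\pi:\R\to\R$ a homeomorphism, and inverting gives $w^{*}(z)\to -\infty$. For the existence of $z^{*}$, the boundary conditions $w^{*}(s^{*})=w^{*}(S^{*})=k$ and $\int_{s^{*}}^{S^{*}}(w^{*}-k)\,\mathrm{d}z=K>0$ force $w^{*}$ to attain an interior maximum at some $z^{*}\in(s^{*},S^{*})$ with $w^{*}(z^{*})>k$ and $w^{*\prime}(z^{*})=0$. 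If $z^{*}>0$ the pivotal identity gives $w^{*\prime\prime}(z^{*})>0$, contradicting the local maximum; and if $z^{*}=0$, then $w^{*\prime\prime}(0^{+})>0$ makes $w^{*\prime}$ strictly increasing just to the right of $0$, again incompatible with a local max. Hence $z^{*}<0$, a fortiori $z^{*}\leq 0$, and by construction $z^{*}\in(s^{*},S^{*})$.

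For uniqueness and the monotonicity \eqref{eq:w-z*}, I argue by contradiction that $w^{*}$ has no critical point besides $z^{*}$. Case (i), a critical point $z_{1}<0$ distinct from $z^{*}$: both $z_{1}$ and $z^{*}$ are strict local maxima in $(-\infty,0)$, so $w^{*}$ on the closed interval between them attains its minimum at some interior point $\hat z\in(-\infty,0)$ with $w^{*\prime}(\hat z)=0$; the dichotomy forces $\hat z$ to be a strict local maximum, contradicting it being a (strict) local minimum. Case (ii), a critical point $z_{1}>0$: then $z_{1}$ is a strict local minimum so $w^{*\prime}>0$ just to its right, but $w^{*}(z)\to-\infty$ forces a subsequent turning point $z_{2}>z_{1}$ where $w^{*}$ attains a local maximum, and $z_{2}\in(0,\infty)$ forces $z_{2}$ to be a strict local minimum---contradiction. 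Case (iii), $z_{1}=0$: the one-sided analysis shows $w^{*\prime}>0$ on both sides of $0$ in a small punctured neighbourhood, while $z^{*}<0$ being a strict local maximum makes $w^{*\prime}<0$ just to the right of $z^{*}$; by continuity $w^{*\prime}$ vanishes at some $\tilde z\in(z^{*},0)$, producing a second critical point in $(-\infty,0)$ and reducing to Case (i). Hence $z^{*}$ is the unique critical point; since $w^{*\prime\prime}(z^{*})=(2/\sigma^{2})h'(z^{*})<0$ makes it a strict local maximum, continuity of $w^{*\prime}$ yields $w^{*\prime}>0$ on $(-\infty,z^{*})$ and $w^{*\prime}<0$ on $(z^{*},\infty)$, which is \eqref{eq:w-z*}.

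The main obstacle is the consistent handling of the point $z=0$, where $h$ is only continuous and the pivotal identity breaks down: every step---existence, location, uniqueness, and sign of $w^{*\prime}$---must verify via one-sided limits that no anomalous behavior occurs at $0$, using that $w^{*}\in\mathcal{C}^{1}(\R)$ together with $h'(0^{-})<0<h'(0^{+})$. A secondary subtlety is the passage from $\pi(w^{*}(z))\to+\infty$ to $w^{*}(z)\to-\infty$, which requires $\pi$ to be a continuous bijection rather than merely tending to $+\infty$ along a subsequence; this is exactly what \eqref{eq:pi'} and \eqref{eq:limpi} in Lemma \ref{lemma:price}(2) provide.
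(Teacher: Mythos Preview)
Your argument for \eqref{eq:limw=} coincides with the paper's. For \eqref{eq:w-z*} your route is genuinely different: the paper evaluates the ODE \eqref{eq:w-ODE} at pairs of carefully chosen points and derives contradictions of the form $\gamma^{*}<\gamma^{*}$, whereas you differentiate \eqref{eq:w-ODE} once and exploit the identity $w^{*\prime\prime}(z_{0})=(2/\sigma^{2})h'(z_{0})$ at critical points $z_{0}\neq 0$ as a second-derivative test. The resulting dichotomy---critical points in $(-\infty,0)$ are strict local maxima, those in $(0,\infty)$ are strict local minima---is elegant, makes the global shape of $w^{*}$ transparent, and your Cases~(i) and~(ii) are correct.

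The gap is at $z=0$. You invoke $h'(0^{-})<0<h'(0^{+})$ to obtain $w^{*\prime\prime}(0^{-})<0<w^{*\prime\prime}(0^{+})$, but Assumption~\ref{assump:h-qd} does not deliver strict signs for the one-sided derivatives of $h$ at the origin: \eqref{eq:h'} speaks only of $z\neq 0$, and for instance $h(z)=z^{2}$ satisfies every item of Assumption~\ref{assump:h-qd} yet has $h'(0^{\pm})=0$. This undermines both your exclusion of $z^{*}=0$---the proposition asserts only $z^{*}\le 0$, and the paper's proof explicitly treats $z^{*}=0$ as an admissible case---and the one-sided reasoning in Case~(iii). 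The repair is local: drop the attempt to force $z^{*}<0$; once Case~(ii) shows there is no critical point in $(0,\infty)$ (so $w^{*\prime}<0$ there by \eqref{eq:limw=}) and, when $z^{*}<0$, Case~(i) eliminates critical points in $(-\infty,0)\setminus\{z^{*}\}$, a single evaluation of \eqref{eq:w-ODE} at $z^{*}$ and at $0$ (using $h(z^{*})>h(0)=0$, $w^{*}(z^{*})>w^{*}(0)$, and $\pi'<0$) forces $w^{*\prime}(0)<0$, rendering Case~(iii) vacuous. If $z^{*}=0$, a short extra argument using $w^{*}(z^{*})>k=w^{*}(s^{*})$ together with your Case-(i) dichotomy disposes of any critical point $z_{1}<0$.
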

\begin{proof}
 We first prove \eqref{eq:limw=}.
It follows from \eqref{eq:w-ODE} that
\[
\frac{\frac{\sigma^2}{2}\frac{\mathrm{d}w^*(z)}{\mathrm{d}z}+\pi(w^*(z))}{h(z)}=1+\frac{\gamma^*}{h(z)},
\]
which, together with \eqref{eq:limh}, implies that
\begin{equation*}
\lim_{z\to\infty}\frac{\frac{\sigma^2}{2}\frac{\mathrm{d}w^*(z)}{\mathrm{d}z}+\pi(w^*(z))}{h(z)}=1.
\end{equation*}
Then, \eqref{eq:limw'/h} implies that
\[
\lim_{z\to\infty}\frac{\pi(w^*(z))}{h(z)}=1,
\]
which, together with \eqref{eq:limh} and \eqref{eq:limpi}, implies \eqref{eq:limw=}.

We prove \eqref{eq:w-z*} in two steps:
\begin{itemize}
\item[(a)]
\begin{equation}
\label{eq:dw(z)/dz<0}
\frac{\mathrm{d}w^*(z)}{\mathrm{d}z}<0\quad \text{for $z\in(0,\infty)$.}
\end{equation}
\item[(b)] The exists a unique $z^*$ with $z^*\leq0$ such that
\begin{equation}
\label{eq:z*-2}
\frac{\mathrm{d}w^*(z)}{\mathrm{d}z}
\left\{
\begin{array}{ll}
>0 & \text{for $z\in(-\infty,z^*)$},\\
=0 & \text{for $z=z^*$},\\
<0 & \text{for $z\in(z^*,0]$}.
\end{array}
\right.
\end{equation}
\end{itemize}
We next give the details step by step.

(a) Assume that there exists a $z_1>0$ such that
\begin{equation}
\label{eq:dw(z1)/dz}
\frac{\mathrm{d}w^*(z_1)}{\mathrm{d}z}\geq0.
\end{equation}
First, \eqref{eq:w-ODE} implies that
\[
\frac{\sigma^2}{2}\frac{\mathrm{d}^2w^*(z_1)}{\mathrm{d}z^2}
+\pi'(w^*(z_1))\frac{\mathrm{d}w^*(z_1)}{\mathrm{d}z}-h'(z_1)=0,
\]
which, together with \eqref{eq:h'}, \eqref{eq:pi'} and \eqref{eq:dw(z1)/dz},
implies that
\[
\frac{\mathrm{d}^2w^*(z_1)}{\mathrm{d}z^2}>0.
\]
Then, the continuity of $\mathrm{d}^2w^*(z)/\mathrm{d}z^2$ in $z\in(0,\infty)$ implies that
there must exist $z_2\in(z_1,\infty)$
such that $\mathrm{d}^2w^*(z)/\mathrm{d}z^2>0$ for $z\in[z_1,z_2]$,
and then \eqref{eq:dw(z1)/dz} implies that $\mathrm{d}w^*(z)/\mathrm{d}z>0$ for $z\in(z_1,z_2]$.
Thus, the continuity of $\mathrm{d}w^*(z)/\mathrm{d}z$ and \eqref{eq:limw=} imply that
 there exists $z_3$ with $z_3>z_2$ such that
\begin{equation}
\label{eq:dw(z3)/dz}
\frac{\mathrm{d}w^*(z_3)}{\mathrm{d}z}=0\quad\text{and}\quad w^*(z_3)>w^*(z_1),
\end{equation}
which plus \eqref{eq:pi'} imply that
\begin{equation}
\label{eq:pi(w(z3))<pi(w(z1))}
\pi(w^*(z_3))<\pi(w^*(z_1)).
\end{equation}
Then we have
\[
\gamma^*
=\frac{\sigma^2}{2}\frac{\mathrm{d}w^*(z_3)}{\mathrm{d}z}+\pi(w^*(z_3))-h(z_3)
<\frac{\sigma^2}{2}\frac{\mathrm{d}w^*(z_1)}{\mathrm{d}z}+\pi(w^*(z_1))-h(z_1)
=\gamma^*,
\]
where the inequality follows from \eqref{eq:dw(z1)/dz}, \eqref{eq:dw(z3)/dz}, \eqref{eq:pi(w(z3))<pi(w(z1))}
and $h(z_3)>h(z_1)$. This contradicts with the fact $\gamma^*=\gamma^*$,
and thus we have \eqref{eq:dw(z)/dz<0}.

(b) Define
\begin{equation}
\label{eq:z*}
z^*=\argmax_{z\in[s^*,S^*]} w^*(z).
\end{equation}
it follows from \eqref{eq:dw(z)/dz<0} that $z^*\leq 0$.
We next show the uniqueness of $z^*$.
Assume that there exist $z_1^*$ and $z_2^*$ with $z_1^*<z_2^*\leq 0$ satisfying
\eqref{eq:z*}, we have
\begin{equation}
\label{eq:w(z1*)=w(z2*)}
w(z_1^*)=w(z_2^*)\quad \text{and}\quad
\frac{\mathrm{d} w^*(z_i^*)}{\mathrm{d} z}=0\quad \text{for $i=1,2$},
\end{equation}
which, together with \eqref{eq:w-ODE},
and $h(z_1^*)>h(z_2^*)$ for $z_1^*<z_2^*\leq0$, imply that
\[
\gamma^*=\frac{\sigma^2}{2}\frac{\mathrm{d}w^*(z_1^*)}{\mathrm{d} z}+\pi(w^*(z_1^*))-h(z_1^*)
<\frac{\sigma^2}{2}\frac{\mathrm{d}w^*(z_2^*)}{\mathrm{d} z}+\pi(w^*(z_2^*))-h(z_2^*)
=\gamma^*.
\]
This contradiction implies that $z^*$ defined in \eqref{eq:z*} is unique.
Furthermore, it follows from  \eqref{eq:w-int=K} and \eqref{eq:w(s)=w(s)=k}
that $z^*\in(s^*,S^*)$.

To prove \eqref{eq:z*-2}, we first prove $\mathrm{d}w^*(z)/\mathrm{d} z>0$ for $z\in(-\infty,z^*)$.
Assume that there exists $z_4\in(-\infty,z^*)$ such that
\begin{equation}
\label{eq:w'(z4)<0}
\frac{\mathrm{d}w^*(z_4)}{\mathrm{d}z}\leq 0.
\end{equation}
Since $z^*$ is the unique maximum point for $w^*$ in $[s^*,S^*]$ and $z^*\in(s^*,S^*)$, there exists $z_5\in(z_4,z^*)$ such that
\begin{equation}
\label{eq:w'(z5)>0}
\frac{\mathrm{d}w^*(z_5)}{\mathrm{d}z}>0.
\end{equation}
Thus, the continuity of $\mathrm{d}w^*(z)/\mathrm{d}z$, \eqref{eq:w'(z4)<0} and \eqref{eq:w'(z5)>0} imply that
\begin{equation}
\label{eq:z6}
z_6=\min\{z\in[z_4,z_5]:\frac{\mathrm{d}w^*(z)}{\mathrm{d}z}=0\}
\end{equation}
is well defined.
Furthermore, it follows from \eqref{eq:w-ODE} that
\[
\frac{\sigma^2}{2}\frac{\mathrm{d}^2w^*(z_4)}{\mathrm{d}z^2}
+\pi'(w^*(z_4))\frac{\mathrm{d}w^*(z_4)}{\mathrm{d}z}-h'(z_4)=0
\]
which, together with \eqref{eq:h'}, \eqref{eq:pi'} and \eqref{eq:w'(z4)<0},
implies that
\[
\frac{\mathrm{d}^2w^*(z_4)}{\mathrm{d}z^2}<0.
\]
Thus, the continuity of $\mathrm{d}^2w^*(z)/\mathrm{d}z^2$ in $z\in(-\infty,0)$ implies that
there exists $z_7$ with $z_7<z_4$ such that
$\mathrm{d}^2w^*(z)/\mathrm{d}z^2<0$ for $z\in[z_7,z_4]$, which plus \eqref{eq:w'(z4)<0}
yield that $\mathrm{d}w^*(z)/\mathrm{d}z<0$ for $z\in[z_7,z_4)$.
It follows from the definition of $z_6$ in \eqref{eq:z6} that
\begin{equation}
\label{eq:w'(z)<0}
\frac{\mathrm{d}w^*(z)}{\mathrm{d}z}<0=\frac{\mathrm{d}w^*(z_6)}{\mathrm{d}z}\quad \text{for $z\in[z_7,z_6)$}
\end{equation}
and then
\begin{equation}
\label{eq:w*(z7)>w*(z6)}
w^*(z_7)>w^*(z_6).
\end{equation}
Therefore, \eqref{eq:pi'}, \eqref{eq:w'(z)<0}, \eqref{eq:w*(z7)>w*(z6)} and $h(z_6)<h(z_7)$ imply that
\[
\gamma^*=\frac{\sigma^2}{2}\frac{\mathrm{d}w^*(z_6)}{\mathrm{d}z}
+\pi(w^*(z_6))-h(z_6)>\frac{\sigma^2}{2}\frac{\mathrm{d}w^*(z_7)}{\mathrm{d}z}+\pi(w^*(z_7))-h(z_7)=\gamma^*.
\]
This contradicts with the fact $\gamma^*=\gamma^*$.

Finally, we prove $\mathrm{d}w^*(z)/\mathrm{d}z<0$ for $z\in(z^*,0]$.
If $z^*=0$, $(z^*,0]=\emptyset$ and the result immediately holds.
Thus we only need to prove the case when $z^*<0$.
Assume that there exists a $z_8\in(z^*,0]$ with
\[
\frac{\mathrm{d}w^*(z_8)}{\mathrm{d}z}\geq0.
\]
Let
\begin{equation*}
z_9=\min\{z\in(z^*,z_8]:\frac{\mathrm{d}w^*(z)}{\mathrm{d}z}=0\}.
\end{equation*}
The uniqueness of $z^*$ in $[s^*,S^*]$, $z^*\in(s^*,S^*)$
and the continuity of $\mathrm{d}w^*(z)/\mathrm{d}z$
imply that $z_9$ is well defined and
\begin{equation}
\label{eq:w(z*)>w(z9)}
w^*(z^*)>w^*(z_9).
\end{equation}
Therefore, \eqref{eq:pi'}, \eqref{eq:w(z*)>w(z9)},
$\mathrm{d}w^*(z^*)/\mathrm{d}z=\mathrm{d}w^*(z_9)/\mathrm{d}z=0$  and $h(z^*)>h(z_9)$ imply that
\[
\gamma^*=\frac{\sigma^2}{2}\frac{\mathrm{d}w^*(z^*)}{\mathrm{d}z}
+\pi(w^*(z^*))-h(z^*)<\frac{\sigma^2}{2}\frac{\mathrm{d}w^*(z_9)}{\mathrm{d}z}+\pi(w^*(z_9))-h(z_9)=\gamma^*,
\]
which contradicts with $\gamma^*=\gamma^*$.
\end{proof}

We are ready to prove our main results.
\begin{proof}[\textbf{Proof of Theorem \ref{thm:mainresults}}]
(1) Since $(w^*,\gamma^*)$ satisfies \eqref{eq:w-ODE} with boundary conditions \eqref{eq:w-int=K}
and \eqref{eq:limw'/h},
Theorem \ref{thm:sSpolicy} tells us that $\gamma^*=\gamma_{(s^*,S^*)}$,
$\bm{p}^*=\bm{p}_{(s^*,S^*)}$ and
$\gamma^*$ is the average profit under $((s^*,S^*),\bm{p}^*)$.
Let
\begin{equation}
\label{eq:V*}
V^*(z)=
\begin{cases}
\int_{s^*}^{z}w^*(y)\,\mathrm{d}y & \text{for $z\in[s^*,\infty)$},\\
k\cdot (x-s^*) & \text{for $z\in(-\infty,s^*)$}.
\end{cases}
\end{equation}
If we can show that $V^*$ and $\gamma^*$ satisfy all conditions
in Proposition \ref{prop:upperbound}, then $\gamma^*$ is an upper bound
for the average profit under any admissible policy $\phi$.
Thus, the optimality of $((s^*,S^*),\bm{p}^*)$ is proven.
We next check that $V^*$ and $\gamma^*$ satisfy all conditions
in Proposition \ref{prop:upperbound}.

First, it follows from \eqref{eq:V*} and \eqref{eq:w(s)=w(s)=k} that $V^*\in\mathcal{C}^1(\R)$ and
is twice continuously differentiable in $\R\setminus\{s^*\}$.
Thus, we have that $V^*(\cdot)\in\mathcal{C}^1(\R)$ with an absolutely continuous derivative
$\mathrm{d}V^*(z)/{\sf d}z$.

We next check \eqref{eq:lbPoission}.
It follows from \eqref{eq:w-ODE} and \eqref{eq:V*} that
for $z\in(s^*,\infty)$,
\[
\frac{\sigma^2}{2}\frac{\mathrm{d}^2V^*(z)}{\mathrm{d}z^2}+
\Pi(p,\frac{\mathrm{d}V^*(z)}{\mathrm{d}z})-h(z)\leq
\frac{\sigma^2}{2}\frac{\mathrm{d}^2V^*(z)}{\mathrm{d}z^2}+
\pi(\frac{\mathrm{d}V^*(z)}{\mathrm{d}z})-h(z)=\gamma^*.
\]
For $z\in(-\infty,s^*)$, we have
\begin{align*}
\frac{\sigma^2}{2}\frac{\mathrm{d}^2V^*(z)}{\mathrm{d}z^2}+
\Pi(p,\frac{\mathrm{d}V^*(z)}{\mathrm{d}z})-h(z)
&\leq\frac{\sigma^2}{2}\frac{\mathrm{d}^2V^*(z)}{\mathrm{d}z^2}+
\pi(\frac{\mathrm{d}V^*(z)}{\mathrm{d}z})-h(z)\\
&=\pi(k)-h(z)\\
&<\frac{\sigma^2}{2}\frac{\mathrm{d}w^*(s^*)}{\mathrm{d}z}+\pi(w^*(s^*))-h(s^*)\\
&=\gamma^*,
\end{align*}
where the first inequality follows from the definition of $\pi$ in \eqref{eq:pi},
the first equality follows from \eqref{eq:V*},
and the second inequality follows from $\mathrm{d}w^*(s^*)/\mathrm{d}z>0$ (see \eqref{eq:w-z*})
and $h(z)>h(s^*)$ (see \eqref{eq:h'} and $z<s^*<z^*\leq 0$), and the last equality follows from \eqref{eq:w-ODE}.

Further, we check \eqref{eq:lbK}. It follows from \eqref{eq:w(s)=w(s)=k}, \eqref{eq:w-z*} and \eqref{eq:V*}
that
\begin{equation}
\label{eq:dV*/dz}
\frac{\mathrm{d}V^*(z)}{\mathrm{d}z}
\begin{cases}
>k & \text{for $z\in(s^*,S^*)$},\\
\leq k & \text{for $z\not\in(s^*,S^*)$}.
\end{cases}
\end{equation}
Then, for $-\infty<z_2<z_1<\infty$, we have
\begin{align*}
V^*(z_1)-V^*(z_2)-k\cdot(z_1-z_2)
&=\int_{z_2}^{z_1}\big[\frac{\mathrm{d}V^*(y)}{\mathrm{d}z}-k\big]\,{\sf d}y\\
&\leq \int_{s^*}^{S^*}\big[\frac{\mathrm{d}V^*(y)}{\mathrm{d}z}-k\big]\,{\sf d}y\\
&=K,
\end{align*}
where the inequality follows from \eqref{eq:dV*/dz} and the last equality follows from \eqref{eq:w-int=K}.

Finally, we check \eqref{eq:lbf}.
Assumption \ref{assump:h-qd} (d) and \eqref{eq:limw'/h} imply that there exists an positive integer
$n$ such that
\[
\lim_{z\to\infty}\frac{\mathrm{d}^2V^*(z)/\mathrm{d}z^2}{z^n}=0,
\]
which yields that there exist some positive numbers $a_1$ and $a_2$ such that
\[
\frac{\mathrm{d}V^*(z)}{\mathrm{d}z}<a_2+a_1 z^{n+1}\quad \text{for $z\geq0$}.
\]
Define $a_0=\max\{k,a_2,\max_{z\in[s^*,0]}\frac{\mathrm{d}V^*(z)}{\mathrm{d}z}\}$, and then we have \eqref{eq:lbf}.

(2) Since $p^*(z)=p_{\pi}(w^*(z))$, this part can be immediately got by Lemma \ref{lemma:price} and Proposition \ref{prop:w}.
\end{proof}

\section{Conclusions}
\label{sec:conclusion}

In this paper, we have studied a Brownian inventory model where product's price and
ordering decisions are simultaneously determined.
The demand drift rate is price-dependent and excess demand is backlogged.
There are two costs: holding/shortage cost and ordering cost.
Our objective is to find a pricing strategy and an ordering strategy
to maximize the expected average profit.
Using an upper bound approach, we have shown that an $(s^*,S^*,\bm{p}^*)$ policy
is optimal and that the optimal price $p^*(z)$
first increases and then decreases as the inventory level $z$ decreases from $S^*$ to $s^*$.

\section*{Acknowledgements}

The author would like to thank the department editor, the associate editor and the anonymous referees for their thoughtful
comments and suggestions, which have helped to significantly improve this paper.
The author thanks Ping Cao from University of Science and Technology of China, Xin Chen from University of Illinois at Urbana-Champaign, 
Shuangchi He from National University of Singapore,
and Guodong Pang from Pennsylvania State University for their useful comments.

\section*{Funding}
The work was supported in part by the National
Natural Science Foundation of China under grant 11401566.

\section*{Notes on contributors}
Dacheng Yao is an Associate Professor of Academy of Mathematics and Systems Science, Chinese Academy of Sciences.
His research interests include inventory control, stochastic models and applied probability.
He received the B.Sc. degree in information and computing science from the Shandong University in 2005,
and the Ph.D. degree in operational research and cybernetics from the Chinese Academy of Sciences in 2010.
He held a postdoctoral position at the Chinese Academy of Sciences from 2010 to 2012. He had visiting positions
at the Georgia Institute of Technology, the National University of Singapore,
and the Chinese University of Hong Kong.

\appendix

\section{Proof of Lemma \ref{lemma:price}}
\label{appendix}

(1) It follows from Assumption \ref{assumption-mu} (b) that
\[
\frac{\partial^2 \Pi(p,w)}{\partial p\partial w}=-\mu'(p)>0,
\]
which tells us that $\Pi(p,w)$ is supermodular,
and then Theorem 2.3.7 in \cite{Simchi-LeviChenBramel2005}
implies that $p_{\pi}(w)$ is increasing in $w\in\R$.

(2) By Assumption \ref{assumption-mu} (a), we have that
$\Pi(p,w)$ is twice continuously differentiable,
and then $\pi(w)$ is continuously differentiable.
We next prove \eqref{eq:pi'}. To prove it, we will show that
\begin{align}
\label{eq:pi'=}
\pi'(w)=-\mu(p_{\pi}(w)),
\end{align}
which yields \eqref{eq:pi'}.
If $p_{\pi}(w)=\underline{p}$ or $p_{\pi}(w)=\bar{p}$,
we immediately get \eqref{eq:pi'=}.
Otherwise, we have $p_{\pi}(w)\in(\underline{p},\bar{p})$,
and then $p_{\pi}(w)$ satisfies
\[
\frac{\partial \Pi(p_{\pi}(w),w)}{\partial p}
=0.
\]
Thus, we have
\[
\pi'(w)=\frac{\partial \Pi(p_{\pi}(w),w)}{\partial p}\frac{{\sf d}p_{\pi}(w)}{{\sf d}w}
+\frac{\partial \Pi(p_{\pi}(w),w)}{\partial w}
=\frac{\partial \Pi(p_{\pi}(w),w)}{\partial w}
=-\mu(p_{\pi}(w)),
\]
which yields \eqref{eq:pi'=}.

It remains to prove \eqref{eq:limpi}.
For them, we have
\begin{align*}
&\lim_{w\to-\infty}\pi(w)
\geq\lim_{w\to-\infty}\mu(\underline{p})[\underline{p}-w]=+\infty, \quad\text{and}\\
&\lim_{w\to\infty}\pi(w)\leq\lim_{w\to\infty}
\big[\max_{p\in[\underline{p},\bar{p}]}\mu(p)p-\mu(\bar{p})w\big]=-\infty,
\end{align*}
where the first inequality follows from the definition of $\pi$,
and the last inequality follows from that for any $w\geq0$,
$\pi(w)=\mu(p_{\pi}(w))[p_{\pi}(w)-w]\leq \max_{p\in[\underline{p},\bar{p}]}\mu(p)p-
\min_{p\in[\underline{p},\bar{p}]}\mu(p)w
=\max_{p\in[\underline{p},\bar{p}]}\mu(p)p-
\mu(\bar{p})w$.
\qed


\begin{thebibliography}{39}

\bibitem[{Allon and Van~Mieghem(2010)}]{AllonVanMieghem2010}
Allon, G. and Van~Mieghem, J.~A. (2010) Global dual sourcing: Tailored base-surge allocation to near- and
offshore production. {\it Management Science}, {\bf 56}(1), 110-124.

\bibitem[{Anand and Aron(2003)}]{AnandAron2003}
Anand, K.~S. and Aron, R. (2003) Group buying on the web: A comparison of price-discovery mechanisms.
{\it Management Science}, {\bf 49}(11), 1546-1562.

\bibitem[{Ata et~al.(2005)Ata, Harrison, and Shepp}]{AtaHarrisonShepp2005}
Ata, B., Harrison, J.~M., and Shepp, L.~A. (2005)
Drift rate control of a {Brownian} processing system.
{\it The Annals of Applied Probability}, {\bf 15}(2), 1145-1160.

\bibitem[{Bar-Ilan and Sulem(1995)}]{Bar-IlanSulem1995}
Bar-Ilan, A. and Sulem, A. (1995)
Explicit solution of inventory problems with delivery lags.
{\it Mathematics of Operations Research}, {\bf 20}(3), 709-720.

\bibitem[{Bather(1966)}]{Bather1966}
Bather, J.~A. (1966) A continuous time inventory model.
{\it Journal of Applied Probability}, {\bf 3}(2), 538-549.

\bibitem[{Benkherouf(2007)}]{Benkherouf2007}
Benkherouf, L. (2007)
On a stochastic inventory model with a generalized holding costs.
{\it European Journal of Operational Research}, {\bf 182}(2), 730-737.

\bibitem[{Bernstein et~al.(2016)Bernstein, Li, and Shang}]{BernsteinLiShang2016}
Bernstein, F., Li, Y., and Shang, K. (2016)
A simple heuristic for joint inventory and pricing models with lead time and backorders.
{\it Management Sciences}, {\bf 62}(8), 2358-2373.

\bibitem[{Bradley(2004)}]{Bradley2004}
Bradley, J.~R. (2004)
A Brownian approximation of a production-inventory system with a manufacturer that subcontracts.
{\it Operations Research}, {\bf 52}(5), 765-784.

\bibitem[{Bradly and Glynn(2002)}]{BradleyGlynn2002}
Bradly, J.~R. and Glynn, P.~W. (2002)
Managing capacity and inventory jointly in manufacturing systems.
{\it Management Science}, {\bf 48}(2), 273-288.

\bibitem[{Byrnes(2003)}]{Byrnes2003}
Byrnes, J. (2003)
Dell manages profitability, not inventory.
{\it Harvard Business School Working Knowledge}
  {http://hbswk.hbs.edu/archive/3497.html}.

\bibitem[{Chao et~al.(2008)Chao, Chen, and Zheng}]{ChaoChenZheng2008}
Chao, X., Chen, H., and Zheng, S. (2008)
Joint replenishment and pricing decisions in inventory systems with stochastically dependent supply capacity.
{\it European Journal of Operational Research}, {\bf 191}(1), 142-155.

\bibitem[{Chao and Zhou(2006)}]{ChaoZhou2006}
Chao, X. and Zhou, S.~X. (2006)
Joint inventory-and-pricing strategy for a stochastic continuous-review system.
{\it IIE Transactions}, {\bf 38}(5), 401-408.

\bibitem[{Chen et~al.(2010)Chen, Wu, and Yao}]{ChenWuYao2010}
Chen, H., Wu, O.~Q., and Yao, D. (2010)
On the benefit of inventory-based dynamic pricing strategies.
{\it Production and Operations Management}, {\bf 19}(3), 249-260.

\bibitem[{Chen and Simchi-Levi(2004{\natexlab{a}})}]{ChenSimchi-Levi2004a}
Chen, X. and Simchi-Levi, D. (2004{\natexlab{a}}).
Coordinating inventory control and pricing strategies with random
  demand and fixed ordering cost: the finite horizon case.
{\it Operations Research}, {\bf 52}(6), 887-896.

\bibitem[{Chen and Simchi-Levi(2004{\natexlab{b}})}]{ChenSimchi-Levi2004b}
Chen, X. and Simchi-Levi, D. (2004{\natexlab{b}}).
Coordinating inventory control and pricing strategies with random
  demand and fixed ordering cost: the infinite horizon case.
{\it Mathematics of Operations Research}, {\bf 29}(3), 689-723.

\bibitem[{Chen and Simchi-Levi(2006)}]{ChenSimchi-Levi2006}
Chen, X. and Simchi-Levi, D. (2006)
Coordinating inventory control and pricing strategies with random
  demand and fixed ordering cost: the continuous review model.
{\it Operations Research Letters}, {\bf 34}(3), 323-332.

\bibitem[{Chung et~al.(2015)Chung, Talluri, and
  Narasimhan}]{ChungTalluriNarasimhan2015}
Chung, W., Talluri, S., and Narasimhan, R. (2015)
Optimal pricing and inventory strategies with multiple price markdowns over time.
{\it European Journal of Operational Research}, {\bf 243}(1), 130-141.

\bibitem[{Dai and Yao(2013)}]{DaiYao2013a}
Dai, J.~G. and Yao, D. (2013)
Brownian inventory models with convex holding cost, part 1: average-optimal controls.
{\it Stochastic Systems}, {\bf 3}(2), 442-499.

\bibitem[{Federgruen and Heching(1999)}]{FedergruenHeching1999}
Federgruen, A. and Heching, A. (1999)
Combined pricing and inventory control under uncertainty.
{\it Operations Research}, {\bf 47}(3), 454-475.

\bibitem[{Feng(2010)}]{Feng2010}
Feng, Q. (2010)
Integrating dynamic pricing and replenishment decisions under supply capacity uncertainty.
{\it Management Science\/}, {\bf 56}(12), 2154-2172.

\bibitem[{Gallego(1990)}]{Gallego1990}
Gallego, G. (1990)
Scheduling the production of several items with random demands in a single facility.
{\it Management Science}, {\bf 36}(12), 1579-1592.

\bibitem[{He et~al.(2016)He, Yao, and Zhang}]{HeYaoZhang2016}
He, S., Yao, D. and Zhang, H. (2016)
Optimal ordering policy for inventory models with quantity-dependent fixed costs.
{\it Mathematics of Operations Research\/}, to appear, http://arxiv.org/abs/1501.00783.

\bibitem[{Lakner and Reed(2010)}]{LaknerReed2010}
Lakner, P. and Reed, J. (2010)
Optimal cash management using impulse control. New York University,
http://people.stern.nyu.edu/jreed/Papers/impulse$\_$control.pdf.

\bibitem[{Li and Zheng(2006)}]{LiZheng2006}
Li, Q. and Zheng, S. (2006)
Joint inventory replenishment and pricing control for systems with uncertain yield and demand.
{\it Operations Research}, {\bf 54}(4), 696-705.

\bibitem[{Mills(1959)}]{Mills1959}
Mills, E.~S. (1959) Uncertainty and price theory.
{\it The Quarterly Journal of Economics}, {\bf 73}(1), 116-130.

\bibitem[{Muthuraman et~al.(2015)Muthuraman, Seshadri, and Wu}]{MuthuramanSeshadriWu2015}
Muthuraman, K., Seshadri, S., and Wu, Q. (2015)
Inventory management with stochastic lead times.
{\it Mathematics of Operations Research}, {\bf 40}(2), 302-327.

\bibitem[{{\O}ksendal(2003)}]{Oksendal2003}
{\O}ksendal, B. (2003)
{\it Stochastic Differential Equations\/}, Springer, Berlin, Heidelberg, 6th edition.

\bibitem[{Ormeci and Vande~Vate(2011)}]{OrmeciVandeVate2011}
Ormeci, M. and Vande~Vate, J. (2011)
Drift control with changeover costs.  {\it Operations Research}, {\bf 59}(2), 427-439.

\bibitem[{Petruzi and Dada(1999)}]{PetruziDada1999}
Petruzi, N.~C. and Dada, M. (1999)
Pricing and the newsvendor problem: a review with extensions.
{\it Operations Research}, {\bf 47}(2), 183-194.

\bibitem[{Porteus(2002)}]{Porteus2002}
Porteus, Evan~L. (2002)
{\it Foundations of Stochastic Inventory Theory\/}, Stanford University Press, Stanford, CA.

\bibitem[{Scarf(1960)}]{Scarf1960}
Scarf, H. (1960)
The optimality of ($s$, $S$) policies in the dynamic inventory problem.
{\it Mathematical Methods in the Social Sciences}, Suppes, P., Arrow, K., and Karlin, S. (eds) Stanford University Press, Stanford, CA, USA.

\bibitem[{Simchi-Levi et~al.(2005)Simchi-Levi, Chen, and
  Bramel}]{Simchi-LeviChenBramel2005}
Simchi-Levi, D., Chen, X., and Bramel, J. (2005)
{\it The Logic of Logistics: Theory, Algorithms, and Applications for Logistics Management}.
Springer-Verlag, New York, Second Edition.

\bibitem[{Sulem(1986)}]{Sulem1986}
Sulem, A. (1986)
A solvable one-dimensional model of a diffusion inventory system. {\it Mathematics of Operations Research}, {\bf 11}(1), 125-133.

\bibitem[{Whitin(1955)}]{Whitin1955}
Whitin, T.~T. (1955) Inventory control and price theory. {\it Management Science}, {\bf 2}(1), 61-80.

\bibitem[{Wu and Chao(2014)}]{WuChao2014}
Wu, J. and Chao, X. (2014)
Optimal control of a {Brownian} production/inventory system with average cost criterion.
{\it Mathematics of Operations Research}, {\bf 39}(1), 163-189.

\bibitem[{Yao et~al.(2015)Yao, Chao, and Wu}]{YaoChaoWu2015}
Yao, D., Chao, X., and Wu, J. (2015)
Optimal control policy for a {Brownian} inventory system with concave ordering cost.
{\it Journal of Applied Probability}, {\bf 52}(4), 909-925.

\bibitem[{Yin and Rajaram(2007)}]{YinRajaram2007}
Yin, R. and Rajaram, K. (2007)
Joint pricing and inventory control with a markovian demand model.
{\it European Journal of Operational Research}, {\bf 182}(1), 113-126.

\bibitem[{Zhang and Zhang(2012)}]{ZhangZhang2012}
Zhang, H. and Zhang, Q. (2012)
An optimal inventory-price coordination policy.
{\it Stochastic Process, Finance and Control, Volume 1: Advances in Statistics, Probability and Actuarial Science}, Cohen, S. N., Madan, D.,
Siu, T. K., and Yang, H. (eds) World Scientific, Singapore, pp. 571-585.

\bibitem[{Zipkin(2000)}]{Zipkin2000}
Zipkin, P.~H. (2000)
{\it Foundations of Inventory Management}, McGraw-Hill/Irwin.

\end{thebibliography}
\end{document}